\documentclass[final,1p]{elsarticle}

\usepackage{amssymb}
\usepackage{amsthm}

\usepackage{amsfonts,amsmath}

\usepackage[colorlinks, linkcolor=blue, citecolor=blue, pdfborder={0 0 0 [0 0]}]{hyperref}

\numberwithin{equation}{section}

\theoremstyle{plain}
\newtheorem{theorem}{Theorem}[section]
\newtheorem{proposition}{Proposition}[section]
\newtheorem{lemma}{Lemma}[section]

\theoremstyle{definition}
\newtheorem{definition}{Definition}[section]

\theoremstyle{remark}
\newtheorem{remark}{Remark}[section]

\journal{Journal of Differential Equations}

\usepackage{mathtools}

\def\cl{\operatorname{cl}}
\def\rd{\mathrm{d}}
\def\Lip{\operatorname{Lip}}

\begin{document}

\begin{frontmatter}

\title{Path-dependent Hamilton--Jacobi equations: \\
Uniqueness results for viscosity solutions \\ defined via families of compact sets\tnoteref{label1}}

\tnotetext[label1]{This work is supported by RSF grant 25-11-00269, \href{https://rscf.ru/en/project/25-11-00269/}{https://rscf.ru/en/project/25-11-00269/}.}

\author[aff1,aff2]{M.I. Gomoyunov}
\ead{m.i.gomoyunov@gmail.com}

\affiliation[aff1]{organization={N.N. Krasovskii Institute of Mathematics and Mechanics of the Ural Branch of the Russian Academy of Sciences},
            addressline={16 S. Kovalevskaya Str.},
            city={Ekaterinburg},
            postcode={620077},
            country={Russia}}

\affiliation[aff2]{organization={Ural Federal University},
            addressline={19 Mira Str.},
            city={Ekaterinburg},
            postcode={620062},
            country={Russia}}

\begin{abstract}
    We consider a path-dependent Hamilton--Jacobi equation with coinvariant derivatives over the space of continuous functions.
    We prove two uniqueness results for viscosity (generalized) solutions defined in terms of coinvariantly smooth test functionals and a dense family of compact subsets of the space of continuous functions.
    It is assumed that the Hamiltonian is continuous and satisfies a local Lipschitz condition in the functional variable with respect to the supremum norm.
    When the Lipschitz constant satisfies a sublinear growth condition in the gradient (impulse) variable, uniqueness is established in the class of continuous viscosity solutions.
    In the general case, without any such growth conditions, uniqueness is established in the class of continuous viscosity solutions that satisfy an additional local Lipschitz condition.
    The proofs are based on the standard method of doubling variables, but use a novel penalty functional for constructing coinvariantly smooth test functionals.
    The obtained results generalize previously known ones by relaxing the assumptions on the Hamiltonian and/or enlarging the class of functionals in which uniqueness is established.
\end{abstract}

\begin{keyword}
Path-dependent Hamilton--Jacobi equation \sep
Coinvariant derivatives \sep
Viscosity solution \sep
Uniqueness theorem \sep
Method of doubling variables

\MSC[2020] 35R15 \sep 35F21 \sep 35D40
\end{keyword}

\end{frontmatter}

\section{Introduction}

    In this paper, we consider issues related to the development the theory of viscosity (generalized) solutions of Hamilton--Jacobi equations with first-order partial derivatives (see, e.g., \cite{Crandall_Lions_1983,Crandall_Evans_Lions_1984}) for the case of path-dependent Hamilton--Jacobi equations with coinvariant derivatives over the space of continuous functions.
    Equations of this type arise in (deterministic) optimal control problems and differential games for time-delay systems.
    For further details, the reader is referred to the survey paper \cite{Gomoyunov_Lukoyanov_RMS_2024}.

    We define a viscosity solution in terms of coinvariantly smooth test functionals and a dense family of compact subsets of the space of continuous functions.
    This approach was originally proposed for Hamilton--Jacobi equations in infinite dimensions in \cite{Soner_1988}.
    For the considered case of path-dependent Hamilton--Jacobi equations, it was first adapted in \cite{Lukoyanov_2007_DE_Eng} and subsequently developed in \cite{Lukoyanov_2007_IMM_Eng,
    Lukoyanov_2010_IMM_Eng_1,
    Kaise_2015,
    Kaise_Kato_Takahashi_2018,
    Zhou_2020_1,
    Kaise_2022,
    Hernandez-Hernandez_Kaise_2024,
    Kaise_2025}.
    Note that there are other approaches of how to define a viscosity solution of a path-dependent Hamilton--Jacobi equation.
    We refer the reader to the discussion in \cite[Section 3.8]{Gomoyunov_Lukoyanov_RMS_2024}.

    The main motivation for this paper is to generalize the uniqueness result for viscosity solutions from \cite[Theorem 4]{Lukoyanov_2010_IMM_Eng_1} (see also \cite[Theorem 8]{Gomoyunov_Lukoyanov_RMS_2024}; particular cases were established earlier in \cite[Theorem 5.1]{Lukoyanov_2007_DE_Eng} and \cite[Theorem 2]{Lukoyanov_2007_IMM_Eng}) to the case where the Hamiltonian satisfies a local Lipschitz condition in the functional variable with respect to the supremum norm.
    Such Lipschitz conditions are quite general and natural in view of applications to optimal control problems and differential games for time-delay systems.
    Observe that uniqueness results for viscosity solutions under similar Lipschitz conditions were obtained in \cite[Theorem 5.2]{Kaise_2022} (particular cases were established earlier in \cite[Theorem 3.3]{Kaise_2015} and \cite[Theorem 5.6]{Kaise_Kato_Takahashi_2018}), in \cite[Theorem 5.1]{Zhou_2020_1}, and in \cite[Theorem 3.2]{Hernandez-Hernandez_Kaise_2024} (see also \cite[Theorem 2.4]{Kaise_2025}).
    However, in the cited results, some additional assumptions on the Hamiltonian are imposed, and uniqueness is proved in classes of viscosity solutions satisfying additional Lipschitz and/or growth conditions.

    We prove two uniqueness results for viscosity solutions of the considered path-depen\-dent Hamilton--Jacobi equation under the assumptions that the Hamiltonian is continuous and satisfies a local Lipschitz condition in the functional variable with respect to the supremum norm.
    In the first result (Theorem \ref{theorem_global_uniqueness_1}), we assume that the Lipschitz constant satisfies a sublinear growth condition in the gradient (impulse) variable;
    uniqueness is proved in the class of continuous functionals.
    The second result (Theorem \ref{theorem_global_uniqueness_2}) addresses a more general case without any such growth conditions; uniqueness is then proved in the narrower class of continuous functionals that satisfy an additional local Lipschitz condition.
    These two results generalize \cite[Theorem 4]{Lukoyanov_2010_IMM_Eng_1}, \cite[Theorem 5.2]{Kaise_2022}, \cite[Theorem 5.1]{Zhou_2020_1}, and \cite[Theorem 3.2]{Hernandez-Hernandez_Kaise_2024} by requiring weaker assumptions on the Hamiltonian and/or by establishing uniqueness in a wider class of functionals.

    The proofs use the method of doubling variables and are based on fairly standard arguments developed for Hamilton--Jacobi equations with first-order partial derivatives in, e.g., \cite[Theorem 3.7]{Bardi_Capuzzo-Dolcetta_1997} and subsequently adapted to path-dependent Hamilton--Jacobi equations with coinvariant derivatives in, e.g., \cite[Theorem 2]{Lukoyanov_2007_IMM_Eng}.
    The generalization of the previously known results is achieved through a novel penalty functional used in the construction of coinvariantly smooth test functionals.
    This penalty functional can be viewed as one of possible extensions of the functional proposed in \cite{Zhou_2020_1} (see also, e.g., \cite{Gomoyunov_Lukoyanov_Plaksin_2021,Zhou_2022}) to the case of doubled variables.
    Note that a straightforward extension to the case of doubled variables yields a functional that fails to be coinvariantly differentiable everywhere \cite[Example 4.1]{Gomoyunov_Plaksin_2023_JFA} and, therefore, cannot be directly taken as a penalty functional.
    Thus, the design of the new penalty functional constitutes the main feature of the presented proofs.

    The paper is organized as follows.
    In Section \ref{section_HJ}, the path-dependent Hamilton--Jacobi equation under study is described and the considered definitions of viscosity solutions are given.
    In Section \ref{section_main_results}, the main results of the paper are formulated and then compared with those known in the literature.
    In Section \ref{section_functional}, the penalty functional is defined and its properties are derived.
    In Section \ref{section_proofs}, the proofs of the main results are provided.

\section{Path-dependent Hamilton--Jacobi equation and viscosity solutions}
\label{section_HJ}

    Let numbers $n \in \mathbb{N}$, $T > 0$, and $h \geq 0$ be fixed.
    We denote by $C \coloneq C([- h, T], \mathbb{R}^n)$ the Banach space of all continuous functions $x \colon [- h, T] \to \mathbb{R}^n$ equipped with the supremum norm $\|x(\cdot)\|_\infty \coloneq \max_{\xi \in [- h, T]} \|x(\xi)\|$, where $\|\cdot\|$ stands for the Euclidean norm in $\mathbb{R}^n$.
    For any $a$, $b \in \mathbb{R}$, we put $a \wedge b \coloneq \min \{a, b\}$ and $a \vee b \coloneq \max \{a, b\}$.
    Accordingly, given a point $(t, x(\cdot)) \in [0, T] \times C$, we define a function $x(\cdot \wedge t) \in C$ by $x(\xi \wedge t) \coloneq x(\xi)$ for all $\xi \in [- h, t]$ and $x(\xi \wedge t) \coloneq x(t)$ for all $\xi \in (t, T]$.
    For every $R > 0$, we denote by $B(R)$ the closed ball in $\mathbb{R}^n$ centered at the origin of radius $R$.

    Given a non-empty set $D \subset C$ such that
    \begin{equation} \label{D_property_1}
        x(\cdot \wedge t)
        \in D
        \quad \forall (t, x(\cdot)) \in [0, T) \times D,
    \end{equation}
    a functional $\varphi \colon [0, T] \times D \to \mathbb{R}$ is called non-anticipative if
    \begin{equation*}
        \varphi(t, x(\cdot))
        = \varphi(t, x(\cdot \wedge t))
        \quad \forall (t, x(\cdot)) \in [0, T] \times D.
    \end{equation*}

    For every point $(t, x(\cdot)) \in [0, T] \times C$, let $\Lip(t, x(\cdot))$ be the set of all functions $z(\cdot) \in C$ such that $z(\cdot \wedge t) = x(\cdot \wedge t)$ and the restriction $z \vert_{[t, T]}(\cdot)$ of the function $z(\cdot)$ to the interval $[t, T]$ is Lipschitz continuous.
    According to \cite{Kim_1999,Lukoyanov_2007_IMM_Eng} (see also, e.g., \cite[Section 3.1]{Gomoyunov_Lukoyanov_RMS_2024}), a functional $\varphi \colon [0, T] \times C \to \mathbb{R}$ is called coinvariantly differentiable ($ci$-differentiable) at a point $(t, x(\cdot)) \in [0, T) \times C$ if there exist $\partial_t \varphi(t, x(\cdot)) \in \mathbb{R}$ and $\nabla_{x(\cdot)} \varphi(t, x(\cdot)) \in \mathbb{R}^n$ such that, for every function $z(\cdot) \in \Lip(t, x(\cdot))$,
    \begin{align*}
        \lim_{\delta \to 0^+} \frac{1}{\delta}
        \bigl( & \varphi(t + \delta, z(\cdot)) - \varphi(t, x(\cdot)) \\
        & \quad - \partial_t \varphi(t, x(\cdot)) \delta
        - \langle \nabla_{x(\cdot)} \varphi(t, x(\cdot)), z(t + \delta) - x(t) \rangle \bigr)
        = 0,
    \end{align*}
    where $\langle \cdot, \cdot \rangle$ stands for the inner product in $\mathbb{R}^n$.
    In this case, $\partial_t \varphi(t, x(\cdot))$ and $\nabla_{x(\cdot)} \varphi(t, x(\cdot))$ are called coinvariant derivatives ($ci$-derivatives) of $\varphi$ at $(t, x(\cdot))$.
    Moreover, $\varphi$ is called coinvariantly smooth ($ci$-smooth) if it is continuous, $ci$-differentiable at every point $(t, x(\cdot)) \in [0, T) \times C$, and the mappings $\partial_t \varphi \colon [0, T) \times C \to \mathbb{R}$, $\nabla_{x(\cdot)} \varphi \colon [0, T) \times C \to \mathbb{R}^n$ are continuous.

    The paper deals with a path-dependent Hamilton--Jacobi equation with $ci$-derivatives of the following form:
    \begin{equation} \label{HJ}
        \partial_t \varphi(t, x(\cdot))
        + H \bigl( t, x(\cdot), \nabla_{x(\cdot)} \varphi(t, x(\cdot)) \bigr)
        = 0.
    \end{equation}
    Here, $(t, x(\cdot)) \in [0, T) \times C$, the Hamiltonian $H \colon [0, T] \times C \times \mathbb{R}^n \to \mathbb{R}$ is given, and the non-anticipative functional $\varphi \colon [0, T] \times C \to \mathbb{R}$ is unknown.

    In accordance with, e.g., \cite{Lukoyanov_2007_IMM_Eng} (see also \cite[Section 3.8]{Gomoyunov_Lukoyanov_RMS_2024}), we give the following two definitions of viscosity subsolutions, supersolutions, and solutions of \eqref{HJ}.
    The first one is local in nature (in the sense that it deals with functionals $\varphi$ defined on $[0, T] \times D$ with a compact set $D \subset C$), and the second one is global.

    \begin{definition} \label{definition_viscosity_D}
        Let $D \subset C$ be a non-empty compact set that satisfies condition \eqref{D_property_1} and let $\varphi \colon [0, T] \times D \to \mathbb{R}$ be a non-anticipative continuous functional.
        \begin{itemize}
            \item[(i)]
            We call $\varphi$ a viscosity $D$-subsolution of \eqref{HJ} if, for every $ci$-smooth (test) func\-tional $\psi \colon [0, T] \times C \to \mathbb{R}$ and every point $(t, x(\cdot)) \in [0, T) \times D$ such that
            \begin{equation} \label{subsolution_condition}
                \varphi(t, x(\cdot)) - \psi(t, x(\cdot))
                = \max_{(\tau, y(\cdot)) \in [0, T] \times D}
                \bigl( \varphi(\tau, y(\cdot)) - \psi(\tau, y(\cdot)) \bigr),
            \end{equation}
            the inequality below is valid:
            \begin{equation*}
                \partial_t \psi(t, x(\cdot))
                + H\bigl(t, x(\cdot), \nabla_{x(\cdot)} \psi(t, x(\cdot)) \bigr)
                \geq 0.
            \end{equation*}

            \item[(ii)]
            We call $\varphi$ a viscosity $D$-supersolution of \eqref{HJ} if, for every $ci$-smooth (test) functional $\psi \colon [0, T] \times C \to \mathbb{R}$ and every point $(t, x(\cdot)) \in [0, T) \times D$ such that
            \begin{equation} \label{supersolution_condition}
                \varphi(t, x(\cdot)) - \psi(t, x(\cdot))
                = \min_{(\tau, y(\cdot)) \in [0, T] \times D}
                \bigl( \varphi(\tau, y(\cdot)) - \psi(\tau, y(\cdot)) \bigr),
            \end{equation}
            the inequality below is valid:
            \begin{equation*}
                \partial_t \psi(t, x(\cdot))
                + H\bigl(t, x(\cdot), \nabla_{x(\cdot)} \psi(t, x(\cdot)) \bigr)
                \leq 0.
            \end{equation*}

            \item[(iii)]
            We call $\varphi$ a viscosity $D$-solution of \eqref{HJ} if $\varphi$ is a viscosity $D$-subsolution and a viscosity $D$-supersolution of \eqref{HJ}.
        \end{itemize}
    \end{definition}

    \begin{definition} \label{definition_viscosity_D_p}
        Let $\varphi \colon [0, T] \times C \to \mathbb{R}$ be a non-anticipative continuous functional.
        Let $P$ be a non-empty set and let $\{D_p\}_{p \in P}$ be a family of non-empty compact subsets of $C$ such that
        \begin{equation} \label{D_p_property_1}
            x(\cdot \wedge t)
            \in D_p
            \quad \forall (t, x(\cdot)) \in [0, T) \times D_p, \ p \in P
        \end{equation}
        and
        \begin{equation} \label{D_p_property_2}
            \cl \bigcup_{p \in P} D_p
            = C,
        \end{equation}
        where $\cl$ denotes the closure in the space $C$.
        We call $\varphi$ a viscosity $\{D_p\}_{p \in P}$-subsolution (resp., $\{D_p\}_{p \in P}$-supersolution, $\{D_p\}_{p \in P}$-solution) of \eqref{HJ} if, for every $p \in P$, the restriction $\varphi \vert_{[0, T] \times D_p}$ of the functional $\varphi$ to the set $[0, T] \times D_p$ is a viscosity $D_p$-subsolution (resp., $D_p$-supersolution, $D_p$-solution) of \eqref{HJ}.
    \end{definition}

\section{Main results}
\label{section_main_results}

    In this section, we present the main results of the paper.
    First, we establish local results concerning viscosity $D$-solutions of the path-dependent Hamilton--Jacobi equation \eqref{HJ}.
    Then, we turn to global results for viscosity $\{D\}_{p \in P}$-solutions of \eqref{HJ}.
    Finally, we compare the presented results with those obtained previously.

\subsection{Local results}

    The following two comparison results hold.
    \begin{theorem} \label{theorem_local_comparison_1}
        Let $D \subset C$ be a non-empty compact set satisfying \eqref{D_property_1} and such that
        \begin{equation} \label{D_property_L}
            L_D
            \coloneq \sup\biggl\{ \frac{\|x(\xi_1) - x(\xi_2)\|}{|\xi_1 - \xi_2|}
            \colon x(\cdot) \in D, \ \xi_1, \xi_2 \in [0, T], \ \xi_1 \neq \xi_2 \biggr\}
            < + \infty.
        \end{equation}
        Suppose that the Hamiltonian $H$ satisfies the following assumptions.
        \begin{itemize}
            \item[$(A.1)$]
            The restriction $H \vert_{[0, T] \times D \times \mathbb{R}^n}$ of the mapping $H$ to the set $[0, T] \times D \times \mathbb{R}^n$ is continuous.

            \item[$(A.2)$]
            There exists a number $L_{H, D} > 0$ such that
            \begin{equation*}
                |H(t, x(\cdot), s) - H(t, y(\cdot), s)|
                \leq L_{H, D} (1 + \|s\|) \|x(\cdot \wedge t) - y(\cdot \wedge t)\|_\infty
            \end{equation*}
            for all $t \in [0, T]$, $x(\cdot)$, $y(\cdot) \in D$, $s \in \mathbb{R}^n$.
        \end{itemize}
        Then, for any viscosity $D$-subsolution $\varphi_1$ and $D$-supersolution $\varphi_2$ of \eqref{HJ} such that
        \begin{equation} \label{varphi_1_varphi_2_boundary}
            \varphi_1(T, x(\cdot))
            \leq \varphi_2(T, x(\cdot))
            \quad \forall x(\cdot) \in D,
        \end{equation}
        the inequality below is valid:
        \begin{equation} \label{varphi_1_varphi_2_comparison}
            \varphi_1(t, x(\cdot))
            \leq \varphi_2(t, x(\cdot))
            \quad \forall (t, x(\cdot)) \in [0, T] \times D.
        \end{equation}
    \end{theorem}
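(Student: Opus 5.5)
We argue by contradiction and use the doubling-of-variables method. Suppose that $\varphi_1(t_0, x_0(\cdot)) - \varphi_2(t_0, x_0(\cdot)) = \theta > 0$ for some $(t_0, x_0(\cdot)) \in [0, T] \times D$. By \eqref{varphi_1_varphi_2_boundary}, $t_0 < T$. We first add a time weight, replacing $\varphi_1$ by $\varphi_1 - \eta (T - t)$ for a small $\eta > 0$; this may be combined with an exponential factor $e^{\lambda t}$ to absorb the Lipschitz term. The modified functional is still a $D$-subsolution, now with a strict margin $\eta$ in the inequality. We then consider, on the compact set $([0, T] \times D)^2$, the functional
\begin{equation*}
    \Phi_\varepsilon(t, x(\cdot), \tau, y(\cdot))
    \coloneq \varphi_1(t, x(\cdot)) - \varphi_2(\tau, y(\cdot))
    - \frac{(t - \tau)^2}{\varepsilon}
    - \frac{\Psi(t, x(\cdot), \tau, y(\cdot))}{\varepsilon}.
\end{equation*}
Here $\Psi$ is a penalty functional that is $ci$-smooth in each pair of variables separately. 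It should be equivalent, up to constants and uniformly on $D$, to $\|x(\cdot \wedge t) - y(\cdot \wedge \tau)\|_\infty^2$, or at least bounded below by a positive definite quantity of this kind. Its $ci$-gradients $\nabla_{x(\cdot)} \Psi$ and $-\nabla_{y(\cdot)} \Psi$ should nearly coincide and be bounded by a multiple of $\|x(\cdot \wedge t) - y(\cdot \wedge \tau)\|_\infty$.

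Since $\Phi_\varepsilon$ is continuous on a compact set, it attains its maximum at some point $(t_\varepsilon, x_\varepsilon, \tau_\varepsilon, y_\varepsilon)$. The standard argument then gives
\begin{equation*}
    \frac{(t_\varepsilon - \tau_\varepsilon)^2 + \Psi}{\varepsilon} \to 0,
    \qquad
    \text{hence } \|x_\varepsilon(\cdot \wedge t_\varepsilon) - y_\varepsilon(\cdot \wedge \tau_\varepsilon)\|_\infty^2 / \varepsilon \to 0.
\end{equation*}
The maximum value stays at least about $\theta$, so by the terminal inequality and uniform continuity on the compact set, $t_\varepsilon, \tau_\varepsilon < T$ for small $\varepsilon$. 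Freezing $(\tau_\varepsilon, y_\varepsilon)$ makes $\varphi_2(\tau_\varepsilon, y_\varepsilon) + [(t - \tau_\varepsilon)^2 + \Psi]/\varepsilon$ a test functional for $\varphi_1$ at $(t_\varepsilon, x_\varepsilon)$. Symmetrically, freezing $(t_\varepsilon, x_\varepsilon)$ gives a test functional for $\varphi_2$ at $(\tau_\varepsilon, y_\varepsilon)$. Non-anticipativity and \eqref{D_property_1} allow the test functionals to be taken on all of $[0, T] \times C$, evaluated at $x(\cdot \wedge t)$. Subtracting the two viscosity inequalities, the time derivatives of $(t - \tau)^2/\varepsilon$ cancel, and those of $\Psi$ nearly cancel. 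What remains is the difference of Hamiltonians at nearby points with nearly equal momenta $s_\varepsilon \approx \nabla \Psi / \varepsilon$.

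To bound this difference, we first handle the time mismatch $t_\varepsilon \neq \tau_\varepsilon$ using (A.1) and the uniform continuity of $H$ on compacts in $s$. This requires boundedness of $s_\varepsilon$, which we obtain from \eqref{D_property_L}: it gives $\|x(\cdot \wedge t) - x(\cdot \wedge t')\|_\infty \le L_D |t - t'|$, so $D$-paths are Lipschitz. The functional mismatch is handled by (A.2), which gives a bound of order
\begin{equation*}
    L_{H, D} (1 + \|s_\varepsilon\|) \, \|x_\varepsilon - y_\varepsilon\|_\infty
    \approx \frac{\|x_\varepsilon - y_\varepsilon\|_\infty^2}{\varepsilon} \to 0.
\end{equation*}
This contradicts the margin $\eta > 0$. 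For momentum boundedness, the alternative is the usual Lipschitz-in-time estimate: comparing $\Phi_\varepsilon$ at $(t_\varepsilon, x_\varepsilon)$ and at $(t_\varepsilon - \delta, x_\varepsilon(\cdot \wedge (t_\varepsilon - \delta)))$, using $L_D$, the penalty term's gradient is controlled. Where this is delicate, it suffices that $\|s_\varepsilon\| \cdot \|x_\varepsilon - y_\varepsilon\|_\infty \to 0$, which the choice of $\Psi$ provides directly.

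The main obstacle is constructing $\Psi$. The naive choice $\|x(\cdot \wedge t) - y(\cdot \wedge \tau)\|_\infty^2$ is not $ci$-differentiable, and its straightforward doubled smoothing fails as well. We would try a Zhou-type functional: a smooth approximation of the sup-norm via a running quantity such as
\begin{equation*}
    \bigl( \|x(t) - y(\tau)\|^2 + \text{(penalized past maximum of } \|x(\xi \wedge t) - y(\xi \wedge \tau)\|^2) \bigr),
\end{equation*}
with the past maximum smoothed through a function like $\bigl(M^2 - \|x(t) - y(\tau)\|^2\bigr)^{3}_+ / M^4$. Here $M$ is the past supremum of the difference over a common time scale, for instance over $\xi \le t \wedge \tau$ together with the frozen tails. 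We would verify that $\Psi$ is $ci$-smooth in each argument, that $\Psi$ is comparable to the squared sup-distance, and that its gradient is at most a constant times the current difference. Establishing these three properties is where the real work lies; the rest is routine.
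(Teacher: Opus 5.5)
Your overall scheme is the paper's: doubling variables, a linear time term giving a strict margin, a penalty that dominates $\|x(\cdot\wedge t)-y(\cdot\wedge\tau)\|_\infty^2$ on $D$ with $x$-gradient controlled by the current difference, and the estimate $\|x_\varepsilon(\cdot\wedge t_\varepsilon)-y_\varepsilon(\cdot\wedge\tau_\varepsilon)\|_\infty^2/\varepsilon\to 0$ to dispose of the $(A.2)$ term. However, the step that removes the time mismatch fails as written.

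You scale both $(t-\tau)^2$ and $\Psi$ by the same $\varepsilon$ and then assert that the momenta $s_\varepsilon$ are bounded because of \eqref{D_property_L}. Lipschitz continuity of paths in $D$ gives no control of $s_\varepsilon=\nabla_{x(\cdot)}\Psi/\varepsilon$. All one has is $\|s_\varepsilon\|\le 8\|x_\varepsilon(t_\varepsilon)-y_\varepsilon(\tau_\varepsilon)\|/\varepsilon\le 8\sqrt{c/\varepsilon}$, where $c$ bounds $\varphi_1(t,x(\cdot))-\varphi_2(\tau,y(\cdot))$ on $([0,T]\times D)^2$. Your fallback, a Lipschitz-in-time comparison of $\Phi_\varepsilon$, needs Lipschitz bounds on $\varphi_1,\varphi_2$. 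That is exactly the extra hypothesis of Theorem \ref{theorem_local_comparison_2} (used through Lemma \ref{lemma}), and it is unavailable here, where $\varphi_1,\varphi_2$ are merely continuous. The fact that $\|s_\varepsilon\|\,\|x_\varepsilon-y_\varepsilon\|_\infty\to 0$ only handles $(A.2)$. Assumption $(A.1)$ gives continuity in $t$ uniformly only on $[0,T]\times D\times B(R)$, with a modulus depending on $R$. So from $|t_\varepsilon-\tau_\varepsilon|=o(\sqrt{\varepsilon})$ and $\|s_\varepsilon\|=O(1/\sqrt{\varepsilon})$ nothing follows about $H(t_\varepsilon,x_\varepsilon(\cdot),s_\varepsilon)-H(\tau_\varepsilon,x_\varepsilon(\cdot),s_\varepsilon)$.

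The repair, as in the paper, is to decouple the two scales. Penalize $(t-\tau)^2/\delta$ with an independent $\delta$, and choose the penalty so that it also dominates $(t-\tau)^2$ on $D$. Then $|t-\tau|\le\sqrt{c\varepsilon}$, the bound on $\|x-y\|_\infty^2/\varepsilon$, and $s\in B(8\sqrt{c/\varepsilon})$ all hold uniformly in $\delta$. One fixes $\varepsilon$ so that $L_{H,D}(1+\|s\|)\|x-y\|_\infty$ is small compared with the margin $\eta$. Only afterwards does one take $\delta$ small, using uniform continuity of $H$ on the compact set $[0,T]\times D\times B(8\sqrt{c/\varepsilon})$. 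For the same reason you should require the two $ci$-gradients to coincide exactly, not just nearly. The paper's penalty is antisymmetric under swapping $(t,x(\cdot))$ and $(\tau,y(\cdot))$, so this holds there.

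Second, the penalty functional, which is the real content of the proof, is not constructed, and the candidate you sketch points the wrong way. A Zhou-type functional of the difference with the supremum over the common interval ``together with the frozen tails'' is essentially the straightforward doubled extension. That extension is known to fail $ci$-differentiability at some points. The missing idea is to take the supremum only over $[-h,t\wedge\tau]$ and to control the tail of the later path on $[t\wedge\tau,t\vee\tau]$ through its Lipschitz constant, via a time term. The paper defines:
\begin{itemize}
\item $V_1=(2L_D^2+1)(t-\tau)^2+2\|x(t)-y(\tau)\|^2$;
\item $V_2=V_1\vee\|x(\cdot\wedge t\wedge\tau)-y(\cdot\wedge t\wedge\tau)\|_\infty^2$;
\item $V^{L_D}=V_2+V_1^2/V_2$ off the diagonal, and zero on it.
\end{itemize}
The estimate $\|x(t)-y(\xi)\|^2\le 2\|x(t)-y(\tau)\|^2+2L_D^2(\tau-t)^2\le V_1$ for $\xi\in[t,\tau]$ gives $V^{L_D}\ge\|x(\cdot\wedge t)-y(\cdot\wedge\tau)\|_\infty^2$ on $D$. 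The gradient is $8(V_1/V_2)(x(t)-y(\tau))$. The sections are $ci$-differentiable precisely because the frozen path lies in $D$ and is therefore $L_D$-Lipschitz on $[0,T]$ (Proposition \ref{proposition_V_3_differentiability}). So \eqref{D_property_L} is what makes the penalty work; it is not a source of momentum bounds.
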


    Note that, under assumption $(A.2)$, the functional $[0, T] \times D \ni (t, x(\cdot)) \mapsto H(t, x(\cdot), s)$ is non-anticipative for every $s \in \mathbb{R}^n$.

    The proof of Theorem \ref{theorem_local_comparison_1} is given in Section \ref{section_proofs}.

    For every $(t, x(\cdot)) \in [0, T) \times C$ and every $v \in B(1)$, let $z_{t, x(\cdot), v}(\cdot)$ be the (unique) function from $\Lip(t, x(\cdot))$ such that
    \begin{equation} \label{z_v}
        z_{t, x(\cdot), v}(\xi)
        = x(t) + v (\xi - t)
        \quad \forall \xi \in (t, T].
    \end{equation}
    Given a non-empty set $D \subset C$ such that
    \begin{equation} \label{D_property_v}
        z_{t, x(\cdot), v}(\cdot)
        \in D
        \quad \forall (t, x(\cdot)) \in [0, T) \times D, \ v \in B(1)
    \end{equation}
    and a functional $\varphi \colon [0, T] \times D \to \mathbb{R}$, denote
    \begin{equation} \label{L_varphi_definition}
        \begin{aligned}
            L_{\varphi, D}
            \coloneq \sup \biggl\{ & \frac{|\varphi(t + \delta, z_{t, x(\cdot), v}(\cdot))
            - \varphi(t, x(\cdot))|}{\delta}
            \colon \\
            & \qquad (t, x(\cdot)) \in [0, T) \times D, \ v \in B(1), \ \delta \in (0, T - t] \biggr\}.
        \end{aligned}
    \end{equation}
    Observe that \eqref{D_property_v} implies \eqref{D_property_1}, since $z_{t, x(\cdot), 0}(\cdot) = x(\cdot \wedge t)$.

    \begin{theorem} \label{theorem_local_comparison_2}
        Let $D \subset C$ be a non-empty compact set satisfying \eqref{D_property_L} and \eqref{D_property_v}.
        Suppose that the Hamiltonian $H$ satisfies $(A.1)$ and the following assumption.
        \begin{itemize}
            \item[$(A.3)$]
            For every $R > 0$, there exists a number $L_{H, D, R} > 0$ such that
            \begin{equation*}
                |H(t, x(\cdot), s) - H(t, y(\cdot), s)|
                \leq L_{H, D, R} \|x(\cdot \wedge t) - y(\cdot \wedge t)\|_\infty
            \end{equation*}
            for all $t \in [0, T]$, $x(\cdot)$, $y(\cdot) \in D$, $s \in B(R)$.
        \end{itemize}
        Let $\varphi_1$ and $\varphi_2$ be a viscosity $D$-subsolution and a viscosity $D$-supersolution of \eqref{HJ} respectively and let inequality \eqref{varphi_1_varphi_2_boundary} be fulfilled.
        In addition, suppose that
        \begin{equation} \label{varphi_1_varphi_2_Lipschitz}
            L_{\varphi_1, D} \vee L_{\varphi_2, D}
            < + \infty.
        \end{equation}
        Then, inequality \eqref{varphi_1_varphi_2_comparison} is valid.
    \end{theorem}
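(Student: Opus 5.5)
We argue by contradiction via doubling of variables, following the scheme of the proof of Theorem \ref{theorem_local_comparison_1}, with the Lipschitz bounds \eqref{varphi_1_varphi_2_Lipschitz} used to keep the test gradients in a fixed ball $B(R)$. Then only $(A.3)$ is needed in place of $(A.2)$.

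Assume $\max_{[0,T]\times D}(\varphi_1-\varphi_2)>0$. For $\lambda>0$ small the quantity $\max_{[0,T]\times D}\bigl(\varphi_1(t,x(\cdot))-\varphi_2(t,x(\cdot))-\lambda(T-t)\bigr)$ is still positive. For $\varepsilon>0$ we consider
\begin{equation*}
\Phi_\varepsilon(t,x(\cdot),\tau,y(\cdot))\coloneq\varphi_1(t,x(\cdot))-\varphi_2(\tau,y(\cdot))-\lambda(T-t)-\tfrac{1}{\varepsilon}\Bigl((t-\tau)^2+\Psi(t,x(\cdot),\tau,y(\cdot))\Bigr)
\end{equation*}
on $([0,T]\times D)^2$. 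Here $\Psi$ is the penalty functional of Section \ref{section_functional}. We use its properties stated there: $\Psi\ge0$; $\Psi=0$ iff $x(\cdot\wedge t)=y(\cdot\wedge\tau)$ with $t=\tau$; $\Psi$ dominates a modulus of $\|x(\cdot\wedge t)-y(\cdot\wedge\tau)\|_\infty$; $\Psi$ is $ci$-smooth separately in $(t,x(\cdot))$ and in $(\tau,y(\cdot))$. Compactness of $D$ and continuity give a maximizer $(t_\varepsilon,x_\varepsilon,\tau_\varepsilon,y_\varepsilon)$. The standard argument then shows $(t_\varepsilon-\tau_\varepsilon)^2/\varepsilon+\Psi_\varepsilon/\varepsilon\to0$. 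By \eqref{varphi_1_varphi_2_boundary} and the positivity above, $t_\varepsilon,\tau_\varepsilon<T$ for small $\varepsilon$. Freezing one pair of variables gives test functionals $\psi_1$ for $\varphi_1$ at $(t_\varepsilon,x_\varepsilon)$ and $\psi_2$ for $\varphi_2$ at $(\tau_\varepsilon,y_\varepsilon)$. If needed we add a small localizing term to make the extremum strict, or extend the functional to all of $C$. The viscosity inequalities then read
\begin{equation*}
-\lambda+\tfrac{1}{\varepsilon}\bigl(2(t_\varepsilon-\tau_\varepsilon)+\partial_t\Psi\bigr)+H(t_\varepsilon,x_\varepsilon,s_1)\ge0,\qquad
\tfrac{1}{\varepsilon}\bigl(2(t_\varepsilon-\tau_\varepsilon)-\partial_\tau\Psi\bigr)+H(\tau_\varepsilon,y_\varepsilon,s_2)\le0,
\end{equation*}
with $s_1=\nabla_x\Psi/\varepsilon$ and $s_2=-\nabla_y\Psi/\varepsilon$.

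The key new step is an $\varepsilon$-independent bound $\|s_1\|,\|s_2\|\le R$. Property \eqref{D_property_v} lets us compare with the competitor $(t_\varepsilon+\delta,z_{t_\varepsilon,x_\varepsilon,v}(\cdot))\in[0,T]\times D$ for any $v\in B(1)$. Maximality of $\Phi_\varepsilon$ gives
\begin{equation*}
\Phi_\varepsilon(t_\varepsilon+\delta,z_{t_\varepsilon,x_\varepsilon,v},\tau_\varepsilon,y_\varepsilon)\le\Phi_\varepsilon(t_\varepsilon,x_\varepsilon,\tau_\varepsilon,y_\varepsilon).
\end{equation*}
We divide by $\delta$, let $\delta\to0^+$, and use $|\varphi_1(t_\varepsilon+\delta,z)-\varphi_1(t_\varepsilon,x_\varepsilon)|\le L_{\varphi_1,D}\delta$. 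This yields
\begin{equation*}
\tfrac{1}{\varepsilon}\bigl(2(t_\varepsilon-\tau_\varepsilon)+\partial_t\Psi+\langle\nabla_x\Psi,v\rangle\bigr)\ge-L_{\varphi_1,D}-\lambda
\end{equation*}
for all $v\in B(1)$. The same competitor with $v=0$ gives the reverse-type bound. Taking $v=\pm\nabla_x\Psi/\|\nabla_x\Psi\|$ and subtracting, we get $\|s_1\|\le 2(L_{\varphi_1,D}+\lambda)$ plus a constant. The same is done for $s_2$ using $L_{\varphi_2,D}$.

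The design of $\Psi$ must also make $s_1-s_2$ and $\tfrac{1}{\varepsilon}(\partial_t\Psi+\partial_\tau\Psi)$ controllable. For the former we want $s_1=s_2$, or a difference tending to zero. For the latter we want a bound involving $L_D$ times a quantity tending to $0$, which is where \eqref{D_property_L} enters: it controls the shift $\|x(\cdot\wedge t)-x(\cdot\wedge\tau)\|_\infty\le L_D|t-\tau|$. These are exactly the properties I expect Section \ref{section_functional} to supply, so I would invoke them.

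Subtracting the two inequalities gives
\begin{equation*}
\lambda\le H(t_\varepsilon,x_\varepsilon,s_1)-H(\tau_\varepsilon,y_\varepsilon,s_2)+o(1).
\end{equation*}
We split the right-hand side as
\begin{equation*}
\bigl[H(t_\varepsilon,x_\varepsilon,s_1)-H(t_\varepsilon,y_\varepsilon(\cdot\wedge\tau_\varepsilon),s_1)\bigr]+\bigl[H(t_\varepsilon,y_\varepsilon(\cdot\wedge\tau_\varepsilon),s_1)-H(\tau_\varepsilon,y_\varepsilon,s_2)\bigr].
\end{equation*}
The first bracket is at most $L_{H,D,R}\|x_\varepsilon(\cdot\wedge t_\varepsilon)-y_\varepsilon(\cdot\wedge\tau_\varepsilon\wedge t_\varepsilon)\|_\infty\to0$ by $(A.3)$ and the bound $\|s_1\|\le R$. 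The second bracket tends to $0$ by uniform continuity of $H$ on the compact set $[0,T]\times D\times B(R)$, using $(A.1)$, $|t_\varepsilon-\tau_\varepsilon|\to0$ and $s_1-s_2\to0$. This contradicts $\lambda>0$.

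The main obstacle is the gradient bound: the $ci$-derivatives of $\Psi$ must be recovered exactly from the one-sided linear extensions $z_{t,x,v}$, which needs $ci$-differentiability of $\Psi$ along precisely these paths. Sublinear growth of the Lipschitz constant is not assumed here, so this bound is what replaces $(A.2)$.
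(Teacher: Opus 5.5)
There is a genuine gap in the gradient bound, the step you single out as key. The overall scheme is the paper's: doubling with $V^{L_D}$, directional bounds from the competitors $z_{t,x(\cdot),v}(\cdot)$ as in Lemma~\ref{lemma}, then $(A.3)$ plus uniform continuity of $H$ on $[0,T]\times D\times B(R)$. Using a single parameter $\varepsilon$ is also fine here, since $R$ will not depend on $\varepsilon$.

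\textbf{Why the gradient bound fails as argued.} The competitors $(t_\varepsilon+\delta,z_{t_\varepsilon,x_\varepsilon,v}(\cdot))$ only move forward in time. So maximality of $\Phi_\varepsilon$ in $(t,x(\cdot))$ gives, for every $v\in B(1)$, only the lower bound $\partial_t\psi_1+\langle s_1,v\rangle\ge -L_{\varphi_1,D}$, where $\partial_t\psi_1=-\lambda+\frac{1}{\varepsilon}\bigl(2(t_\varepsilon-\tau_\varepsilon)+\partial_t\Psi\bigr)$.
\begin{itemize}
\item Taking $v=0$ gives another lower bound on $\partial_t\psi_1$, not a reverse one.
\item The choices $v=\pm s_1/\|s_1\|$ give two inequalities in the same direction, which cannot be subtracted.
\item All you actually get is $\|s_1\|\le \partial_t\psi_1+L_{\varphi_1,D}$. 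But $\partial_t\psi_1$ is a priori only $O(|t_\varepsilon-\tau_\varepsilon|/\varepsilon)$, with no $\varepsilon$-uniform bound, so your ``constant'' does not exist.
\item Treating $s_2$ separately has the mirror defect: you get $\|s_2\|\le L_{\varphi_2,D}-\partial_\tau\psi_2$ with no lower bound on $\partial_\tau\psi_2$.
\end{itemize}
No one-sided argument can repair this. For $\varphi\equiv 0$, the $ci$-smooth functional $\psi(t,x(\cdot))=L_D\|s\|t+\langle s,x(t)-x(0)\rangle$ satisfies \eqref{subsolution_condition} at every point $(0,x(\cdot))$ with $x(\cdot)\in D$, for every $s\in\mathbb{R}^n$.

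\textbf{The missing idea: couple the two sides.} You must use the antisymmetry of $V^{L_D}$ (Theorem~\ref{theorem_V^L_2}, (iii)) already in the gradient bound, not only in the final subtraction. In your notation it gives exactly $s_1=s_2=:s$ and $\partial_t\psi_1=\partial_\tau\psi_2-\lambda$. The argument then runs as follows.
\begin{itemize}
\item The supersolution-side competitor at $(\tau_\varepsilon,y_\varepsilon)$ with $v=0$ gives $\partial_\tau\psi_2\le L_{\varphi_2,D}$.
\item Hence $\partial_t\psi_1\le L_{\varphi_2,D}$.
\item The subsolution bound with $v=-s/\|s\|$ then gives $\|s\|\le L_{\varphi_1,D}+L_{\varphi_2,D}$.
\end{itemize}
This is precisely the paper's argument. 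Equivalently, add the subsolution bound with $v=-s_1/\|s_1\|$ to the supersolution bound $\partial_\tau\psi_2+\langle s_2,v\rangle\le L_{\varphi_2,D}$ with $v=s_2/\|s_2\|$. This gives $\|s_1\|+\|s_2\|\le L_{\varphi_1,D}+L_{\varphi_2,D}-\lambda+\frac{1}{\varepsilon}(\partial_t\Psi+\partial_\tau\Psi)$. So the property you ask of $\Psi$ for the final step is exactly what is needed here; for $V^{L_D}$ the last term vanishes.

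With $R=L_{\varphi_1,D}+L_{\varphi_2,D}+1$, your concluding step then goes through. One small point in the second bracket: first use that $(A.3)$ makes $H$ non-anticipative, so $H(\tau_\varepsilon,y_\varepsilon,s)=H(\tau_\varepsilon,y_\varepsilon(\cdot\wedge\tau_\varepsilon),s)$. This is needed because $y_\varepsilon$ and $y_\varepsilon(\cdot\wedge\tau_\varepsilon)$ need not be close in $C$.
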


    Compared to Theorem \ref{theorem_local_comparison_1}, Theorem \ref{theorem_local_comparison_2} relaxes assumption $(A.2)$ to $(A.3)$, but requires in turn that a viscosity $D$-subsolution $\varphi_1$ and a viscosity $D$-superso\-lu\-tion $\varphi_2$ satisfy the additional condition \eqref{varphi_1_varphi_2_Lipschitz}, which can be regarded as a special Lipschitz condition for $\varphi_1$ and $\varphi_2$.

    The proof of Theorem \ref{theorem_local_comparison_2} is given in Section \ref{section_proofs}.
    It almost completely repeats that of Theorem \ref{theorem_local_comparison_1} except for the final step, where assumption $(A.2)$ is used.
    Roughly speaking, the relaxing of assumption $(A.2)$ to $(A.3)$ is possible because the Lipschitz condition \eqref{varphi_1_varphi_2_Lipschitz} allows us to derive suitable estimates for the $ci$-derivatives of the $ci$-smooth test functionals (see Lemma \ref{lemma} in Section \ref{section_proofs}).

    In view of the definition of a viscosity $D$-solution of \eqref{HJ} (see Definition \ref{definition_viscosity_D}, (iii)), Theorems \ref{theorem_local_comparison_1} and \ref{theorem_local_comparison_2} yield the two uniqueness results below.
    \begin{theorem} \label{theorem_local_uniqueness_1}
        Let $D \subset C$ be a non-empty compact set satisfying \eqref{D_property_1} and \eqref{D_property_L} and let the Hamiltonian $H$ satisfy assumptions $(A.1)$ and $(A.2)$.
        Then, there exists at most one viscosity $D$-solution $\varphi$ of \eqref{HJ}.
    \end{theorem}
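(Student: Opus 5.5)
The plan is to deduce Theorem \ref{theorem_local_uniqueness_1} directly from the comparison result of Theorem \ref{theorem_local_comparison_1}. That theorem is stated earlier and may be assumed, so the work is only to check that its hypotheses hold when it is applied twice, once in each direction.

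Let $\varphi$ and $\tilde{\varphi}$ be two viscosity $D$-solutions of \eqref{HJ}. I read the statement, as the paper's formulation implies, as concerning solutions with a common terminal condition. The uniqueness to be proved is therefore for functionals satisfying
\begin{equation*}
    \varphi(T, x(\cdot)) = \tilde{\varphi}(T, x(\cdot)) \quad \forall x(\cdot) \in D.
\end{equation*}
If no terminal condition were fixed, uniqueness would fail trivially. For instance, with $H \equiv 0$ every constant functional is a solution. So the statement must be interpreted with this condition.

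By Definition \ref{definition_viscosity_D}, (iii), $\varphi$ is a viscosity $D$-subsolution and $\tilde{\varphi}$ is a viscosity $D$-supersolution. Both are non-anticipative and continuous on $[0, T] \times D$. The set $D$ is compact and satisfies \eqref{D_property_1} and \eqref{D_property_L}, and $H$ satisfies $(A.1)$ and $(A.2)$. The equality of terminal values gives \eqref{varphi_1_varphi_2_boundary} with $\varphi_1 = \varphi$ and $\varphi_2 = \tilde{\varphi}$. Theorem \ref{theorem_local_comparison_1} then yields $\varphi \leq \tilde{\varphi}$ on $[0, T] \times D$.

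Next I exchange the roles: $\tilde{\varphi}$ serves as the subsolution and $\varphi$ as the supersolution. The terminal inequality again holds because of the equality at $t = T$, and the same theorem gives $\tilde{\varphi} \leq \varphi$. Combining the two inequalities gives $\varphi = \tilde{\varphi}$ on $[0, T] \times D$, which is the claim.

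There is no substantive obstacle here. The analytic difficulty, namely the doubling-of-variables argument with the new penalty functional, is entirely contained in Theorem \ref{theorem_local_comparison_1}. The only point needing care is to state explicitly that uniqueness is understood under a prescribed terminal condition on $\{T\} \times D$.
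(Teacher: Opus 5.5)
Your proposal is correct and matches the paper, which obtains Theorem \ref{theorem_local_uniqueness_1} in the same way: it applies Theorem \ref{theorem_local_comparison_1} twice, with the roles of sub- and supersolution exchanged. Your remark that uniqueness must be understood under a common terminal condition on $\{T\} \times D$ is also right; the paper leaves this implicit, and your $H \equiv 0$ example shows that the literal statement would fail without it.
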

    \begin{theorem} \label{theorem_local_uniqueness_2}
        Let $D \subset C$ be a non-empty compact set satisfying \eqref{D_property_L} and \eqref{D_property_v} and let the Hamiltonian $H$ satisfy assumptions $(A.1)$ and $(A.3)$.
        Then, there exists at most one viscosity $D$-solution $\varphi$ of \eqref{HJ} such that $L_{\varphi, D} < + \infty$.
    \end{theorem}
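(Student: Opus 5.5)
The plan is to derive the statement directly from Theorem \ref{theorem_local_comparison_2} by applying it twice, with the roles of the two solutions exchanged, exactly as Theorem \ref{theorem_local_uniqueness_1} follows from Theorem \ref{theorem_local_comparison_1}. As written, the statement does not mention terminal data. Since solutions of \eqref{HJ} with different values at $t = T$ are generally distinct, uniqueness has to be read in the usual sense: among viscosity $D$-solutions with prescribed values $\varphi(T, x(\cdot))$, $x(\cdot) \in D$. I will prove it in that form.

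So let $\varphi$ and $\tilde{\varphi}$ be two viscosity $D$-solutions of \eqref{HJ} with $L_{\varphi, D} < + \infty$, $L_{\tilde{\varphi}, D} < + \infty$, and $\varphi(T, x(\cdot)) = \tilde{\varphi}(T, x(\cdot))$ for all $x(\cdot) \in D$.

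First, I check the hypotheses of Theorem \ref{theorem_local_comparison_2} for the pair $\varphi_1 \coloneq \varphi$, $\varphi_2 \coloneq \tilde{\varphi}$.
\begin{itemize}
    \item[(a)] $D$ is a non-empty compact set satisfying \eqref{D_property_L} and \eqref{D_property_v}, and $H$ satisfies $(A.1)$ and $(A.3)$; these are the standing assumptions.
    \item[(b)] By Definition \ref{definition_viscosity_D}, (iii), $\varphi$ is in particular a viscosity $D$-subsolution, and $\tilde{\varphi}$ is in particular a viscosity $D$-supersolution.
    \item[(c)] The equality of terminal values gives \eqref{varphi_1_varphi_2_boundary}.
    \item[(d)] The assumed finiteness of $L_{\varphi, D}$ and $L_{\tilde{\varphi}, D}$ is precisely \eqref{varphi_1_varphi_2_Lipschitz}.
\end{itemize}
Hence \eqref{varphi_1_varphi_2_comparison} yields $\varphi \leq \tilde{\varphi}$ on $[0, T] \times D$.

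Second, I repeat the argument with $\varphi_1 \coloneq \tilde{\varphi}$ and $\varphi_2 \coloneq \varphi$. This is legitimate because both functionals are simultaneously $D$-subsolutions and $D$-supersolutions, and both conditions \eqref{varphi_1_varphi_2_boundary} and \eqref{varphi_1_varphi_2_Lipschitz} are symmetric in the pair. It gives $\tilde{\varphi} \leq \varphi$, and therefore $\varphi = \tilde{\varphi}$ on $[0, T] \times D$.

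There is no genuine obstacle here: all the analytic work (doubling of variables, the penalty functional, and the estimates on the $ci$-derivatives of the test functionals that allow $(A.2)$ to be relaxed to $(A.3)$) is contained in Theorem \ref{theorem_local_comparison_2}. The only points needing care are bookkeeping ones. One is making explicit that uniqueness refers to a fixed terminal condition. The other is noting that the Lipschitz-type hypothesis \eqref{varphi_1_varphi_2_Lipschitz} must hold for both functionals, which is exactly why the class $L_{\varphi, D} < + \infty$ appears in the statement.
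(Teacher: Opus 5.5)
Your proposal is correct and is essentially the paper's argument: the paper obtains this theorem from Theorem \ref{theorem_local_comparison_2} applied twice with the roles of the two solutions exchanged, using Definition \ref{definition_viscosity_D}, (iii). Your remark that uniqueness must be understood among solutions with the same values on $\{T\} \times D$ is a sensible clarification, since the statement as written leaves this implicit.
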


\subsection{Global results}

    According to Definition \ref{definition_viscosity_D_p}, the family $\{D_p\}_{p \in P}$ satisfies the density condition \eqref{D_p_property_2}, and viscosity $\{D_p\}_{p \in P}$-subsolutions and $\{D_p\}_{p \in P}$-super\-so\-lu\-tions of \eqref{HJ} are continuous.
    Hence, Theorems \ref{theorem_local_comparison_1} and \ref{theorem_local_comparison_2} immediately imply the following two comparison results.
    \begin{theorem} \label{theorem_global_comparison_1}
        Let $P$ be a non-empty set and let $\{D_p\}_{p \in P}$ be a family of non-empty compact subsets of $C$ satisfying \eqref{D_p_property_1} and \eqref{D_p_property_2} and such that
        \begin{equation} \label{D_p_property_L}
            \sup\biggl\{ \frac{\|x(\xi_1) - x(\xi_2)\|}{|\xi_1 - \xi_2|}
            \colon x(\cdot) \in D_p, \ \xi_1, \xi_2 \in [0, T], \ \xi_1 \neq \xi_2 \biggr\}
            < + \infty
            \quad \forall p \in P.
        \end{equation}
        Suppose that the Hamiltonian $H$ satisfies the following assumptions.
        \begin{itemize}
            \item[$(B.1)$]
            For every $p \in P$, the restriction $H \vert_{[0, T] \times D_p \times \mathbb{R}^n}$ is continuous.

            \item[$(B.2)$]
            For every $p \in P$, there exists a number $L_{H, D_p} > 0$ such that
            \begin{equation*}
                |H(t, x(\cdot), s) - H(t, y(\cdot), s)|
                \leq L_{H, D_p} (1 + \|s\|) \|x(\cdot \wedge t) - y(\cdot \wedge t)\|_\infty
            \end{equation*}
            for all $t \in [0, T]$, $x(\cdot)$, $y(\cdot) \in D_p$, $s \in \mathbb{R}^n$.
        \end{itemize}
        Then, for any viscosity $\{D_p\}_{p \in P}$-subsolution $\varphi_1$ and $\{D_p\}_{p \in P}$-supersolution $\varphi_2$ of \eqref{HJ} such that
        \begin{equation} \label{varphi_1_varphi_2_boundary_global}
            \varphi_1(T, x(\cdot))
            \leq \varphi_2(T, x(\cdot))
            \quad \forall x(\cdot) \in \bigcup_{p \in P} D_p,
        \end{equation}
        the inequality below is valid:
        \begin{equation} \label{varphi_1_varphi_2_comparison_global}
            \varphi_1(t, x(\cdot))
            \leq \varphi_2(t, x(\cdot))
            \quad \forall (t, x(\cdot)) \in [0, T] \times C.
        \end{equation}
    \end{theorem}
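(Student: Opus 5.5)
The plan is to reduce Theorem \ref{theorem_global_comparison_1} to the local comparison result, Theorem \ref{theorem_local_comparison_1}, applied to each compact set $D_p$ separately. Then the inequality will be extended from $\bigcup_{p \in P} D_p$ to all of $C$ by continuity and the density condition \eqref{D_p_property_2}.

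First, fix an arbitrary $p \in P$ and put $D \coloneq D_p$. We check that $D$ satisfies the hypotheses of Theorem \ref{theorem_local_comparison_1}.
\begin{itemize}
    \item[(a)] $D$ is non-empty and compact, and condition \eqref{D_property_1} is exactly \eqref{D_p_property_1} for this $p$.
    \item[(b)] Condition \eqref{D_property_L} is the $p$-th instance of \eqref{D_p_property_L}.
    \item[(c)] Assumptions $(A.1)$ and $(A.2)$ with $D = D_p$ are exactly $(B.1)$ and $(B.2)$ for this $p$, with $L_{H, D} \coloneq L_{H, D_p}$.
\end{itemize}
By Definition \ref{definition_viscosity_D_p}, the restrictions $\varphi_1 \vert_{[0, T] \times D_p}$ and $\varphi_2 \vert_{[0, T] \times D_p}$ are a viscosity $D_p$-subsolution and a viscosity $D_p$-supersolution of \eqref{HJ}, respectively. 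These restrictions are non-anticipative and continuous because $\varphi_1$ and $\varphi_2$ are. The terminal inequality \eqref{varphi_1_varphi_2_boundary} on $D_p$ follows from \eqref{varphi_1_varphi_2_boundary_global}, since $D_p \subset \bigcup_{p \in P} D_p$. Theorem \ref{theorem_local_comparison_1} therefore gives $\varphi_1(t, x(\cdot)) \leq \varphi_2(t, x(\cdot))$ for all $(t, x(\cdot)) \in [0, T] \times D_p$. Since $p$ is arbitrary, the inequality holds on $[0, T] \times \bigcup_{p \in P} D_p$.

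Second, fix an arbitrary $(t, x(\cdot)) \in [0, T] \times C$. By \eqref{D_p_property_2}, there is a sequence $x_k(\cdot) \in \bigcup_{p \in P} D_p$ with $\|x_k(\cdot) - x(\cdot)\|_\infty \to 0$. Then $(t, x_k(\cdot)) \to (t, x(\cdot))$ in $[0, T] \times C$. Since $\varphi_1$ and $\varphi_2$ are continuous on $[0, T] \times C$ by Definition \ref{definition_viscosity_D_p}, we can pass to the limit in $\varphi_1(t, x_k(\cdot)) \leq \varphi_2(t, x_k(\cdot))$ and obtain \eqref{varphi_1_varphi_2_comparison_global}.

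There is no real obstacle here. All the analytic difficulty, namely doubling variables and the new penalty functional, sits inside Theorem \ref{theorem_local_comparison_1}, which we take as given. The only point requiring care is matching the hypotheses exactly: the constant $L_{H, D_p}$ may depend on $p$, which is harmless because each set $D_p$ is treated separately, and continuity of $\varphi_1$ and $\varphi_2$ on the whole space $C$, not only on each $D_p$, is what justifies the density step.
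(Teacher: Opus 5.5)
Your proposal is correct and matches the paper's argument: you apply Theorem \ref{theorem_local_comparison_1} to each $D_p$, whose hypotheses are exactly the $p$-th instances of \eqref{D_p_property_1}, \eqref{D_p_property_L}, $(B.1)$ and $(B.2)$, and then extend the inequality to $[0, T] \times C$ using the density condition \eqref{D_p_property_2} and the continuity of $\varphi_1$ and $\varphi_2$. The paper states this reduction as immediate, and your write-up just spells out the details.
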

    \begin{theorem} \label{theorem_global_comparison_2}
        Let $P$ be a non-empty set and let $\{D_p\}_{p \in P}$ be a family of non-empty compact subsets of $C$ satisfying \eqref{D_p_property_2} and \eqref{D_p_property_L} and such that
        \begin{equation} \label{D_property_v_p}
            z_{t, x(\cdot), v}(\cdot)
            \in D_p
            \quad \forall (t, x(\cdot)) \in [0, T) \times D_p, \ v \in B(1), \ p \in P.
        \end{equation}
        Suppose that the Hamiltonian $H$ satisfies $(B.1)$ and the following assumption.
        \begin{itemize}
            \item[$(B.3)$]
            For every $p \in P$ and every $R > 0$, there exists a number $L_{H, D_p, R} > 0$ such that
            \begin{equation*}
                |H(t, x(\cdot), s) - H(t, y(\cdot), s)|
                \leq L_{H, D_p, R} \|x(\cdot \wedge t) - y(\cdot \wedge t)\|_\infty
            \end{equation*}
            for all $t \in [0, T]$, $x(\cdot)$, $y(\cdot) \in D_p$, $s \in B(R)$.
        \end{itemize}
        Let $\varphi_1$ and $\varphi_2$ be a viscosity $\{D_p\}_{p \in P}$-subsolution and a viscosity $\{D_p\}_{p \in P}$-supersolution  of \eqref{HJ} respectively and let inequality \eqref{varphi_1_varphi_2_boundary_global} be fulfilled.
        In addition, suppose that
        \begin{equation*}
            L_{\varphi_1, D_p} \vee L_{\varphi_2, D_p}
            < + \infty
            \quad \forall p \in P.
        \end{equation*}
        Then, inequality \eqref{varphi_1_varphi_2_comparison_global} is valid.
    \end{theorem}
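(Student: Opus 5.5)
The plan is to reduce Theorem \ref{theorem_global_comparison_2} to the local comparison result, Theorem \ref{theorem_local_comparison_2}, applied separately on each compact set $D_p$. Then the density condition \eqref{D_p_property_2} and the continuity of $\varphi_1$ and $\varphi_2$ carry the inequality over to all of $[0, T] \times C$.

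First, fix $p \in P$ and check that $D \coloneq D_p$ satisfies the hypotheses of Theorem \ref{theorem_local_comparison_2}.
\begin{itemize}
    \item[(a)] $D_p$ is a non-empty compact subset of $C$ by assumption.
    \item[(b)] Condition \eqref{D_property_L} for $D_p$ is exactly the $p$-th instance of \eqref{D_p_property_L}.
    \item[(c)] Condition \eqref{D_property_v} for $D_p$ is the $p$-th instance of \eqref{D_property_v_p}. Hence \eqref{D_property_1} also holds, as remarked after \eqref{L_varphi_definition}.
    \item[(d)] Assumption $(A.1)$ with $D = D_p$ is $(B.1)$ for this $p$.
    \item[(e)] Assumption $(A.3)$ with $D = D_p$ is $(B.3)$ for this $p$, with $L_{H, D, R} \coloneq L_{H, D_p, R}$.
\end{itemize}

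Next, check the conditions on the functionals. By Definition \ref{definition_viscosity_D_p}, the restrictions $\varphi_1 \vert_{[0, T] \times D_p}$ and $\varphi_2 \vert_{[0, T] \times D_p}$ are a viscosity $D_p$-subsolution and a viscosity $D_p$-supersolution of \eqref{HJ}, respectively. They are non-anticipative and continuous, since $\varphi_1$ and $\varphi_2$ are. The terminal inequality \eqref{varphi_1_varphi_2_boundary} on $D_p$ follows from \eqref{varphi_1_varphi_2_boundary_global} because $D_p \subset \bigcup_{p' \in P} D_{p'}$. The Lipschitz-type condition \eqref{varphi_1_varphi_2_Lipschitz} is the hypothesis $L_{\varphi_1, D_p} \vee L_{\varphi_2, D_p} < + \infty$. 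Note that $L_{\varphi_i, D_p}$ depends only on the restriction of $\varphi_i$ to $[0, T] \times D_p$; this is well defined because of \eqref{D_property_v_p}. Theorem \ref{theorem_local_comparison_2} then gives $\varphi_1 \leq \varphi_2$ on $[0, T] \times D_p$. Since $p$ was arbitrary, the inequality holds on $[0, T] \times \bigcup_{p \in P} D_p$.

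Finally, take any $(t, x(\cdot)) \in [0, T] \times C$. By \eqref{D_p_property_2}, there is a sequence $x_k(\cdot) \in \bigcup_{p \in P} D_p$ with $\|x_k(\cdot) - x(\cdot)\|_\infty \to 0$. We have $\varphi_1(t, x_k(\cdot)) \leq \varphi_2(t, x_k(\cdot))$ for every $k$. Continuity of $\varphi_1$ and $\varphi_2$ on $[0, T] \times C$ lets us pass to the limit, which gives \eqref{varphi_1_varphi_2_comparison_global}.

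There is no real obstacle here, since all the analytic difficulty sits in Theorem \ref{theorem_local_comparison_2}. The only point needing care is that every local hypothesis is stated for the restricted functionals on $D_p$, which is the bookkeeping done in the second paragraph.
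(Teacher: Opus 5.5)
Your proposal is correct and follows the paper's approach: the paper also obtains this theorem by applying Theorem \ref{theorem_local_comparison_2} on each $D_p$ and then extending the inequality to all of $[0, T] \times C$ via the density condition \eqref{D_p_property_2} and the continuity of $\varphi_1$ and $\varphi_2$. Your hypothesis-by-hypothesis check is a faithful expansion of the paper's one-line justification, including the fact that \eqref{D_property_v_p} implies \eqref{D_p_property_1}.
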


    In a similar way, we conclude that the two uniqueness results below hold.
    \begin{theorem} \label{theorem_global_uniqueness_1}
        Let $P$ be a non-empty set, let $\{D_p\}_{p \in P}$ be a family of non-empty compact subsets of $C$ satisfying \eqref{D_p_property_1}, \eqref{D_p_property_2}, and \eqref{D_p_property_L}, and let the Hamiltonian $H$ satisfy assumptions $(B.1)$ and $(B.2)$.
        Then, there exists at most one viscosity $\{D\}_{p \in P}$-solution $\varphi$ of \eqref{HJ}.
    \end{theorem}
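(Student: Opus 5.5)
Theorem \ref{theorem_global_uniqueness_1} follows directly from the global comparison result, Theorem \ref{theorem_global_comparison_1}, applied twice with the roles of the two solutions swapped.

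Let $\varphi$ and $\tilde\varphi$ be two viscosity $\{D_p\}_{p \in P}$-solutions of \eqref{HJ}. By Definition \ref{definition_viscosity_D_p} and Definition \ref{definition_viscosity_D}, (iii), each restriction to $[0,T]\times D_p$ is both a viscosity $D_p$-subsolution and a viscosity $D_p$-supersolution. So $\varphi$ is a $\{D_p\}_{p\in P}$-subsolution, $\tilde\varphi$ is a $\{D_p\}_{p\in P}$-supersolution, and vice versa.

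The one point that needs care is the terminal condition \eqref{varphi_1_varphi_2_boundary_global}. The uniqueness statement does not mention terminal data explicitly, so I read it, as is standard for such uniqueness theorems, for solutions satisfying a common terminal condition $\varphi(T,x(\cdot)) = \tilde\varphi(T,x(\cdot))$. It then suffices that this holds on $\bigcup_{p\in P} D_p$. Hence \eqref{varphi_1_varphi_2_boundary_global} holds with $(\varphi_1,\varphi_2) = (\varphi,\tilde\varphi)$ and also with $(\varphi_1,\varphi_2) = (\tilde\varphi,\varphi)$.

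The family $\{D_p\}_{p\in P}$ satisfies \eqref{D_p_property_1}, \eqref{D_p_property_2}, and \eqref{D_p_property_L}, and $H$ satisfies $(B.1)$ and $(B.2)$. These are exactly the hypotheses of Theorem \ref{theorem_global_comparison_1}. Applying it twice gives $\varphi \le \tilde\varphi$ and $\tilde\varphi \le \varphi$ on $[0,T]\times C$, so $\varphi = \tilde\varphi$.

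If one prefers not to take Theorem \ref{theorem_global_comparison_1} as given, it reduces to the local result as follows. Fix $p$. Theorem \ref{theorem_local_comparison_1}, applied on $D = D_p$ with $(A.1)$, $(A.2)$ given by $(B.1)$, $(B.2)$ and $L_D$ finite by \eqref{D_p_property_L}, gives the inequality on $[0,T]\times D_p$. The density condition \eqref{D_p_property_2}, together with continuity of both functionals on $[0,T]\times C$, extends it to all of $[0,T]\times C$.

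The only substantive work, proving the local comparison result Theorem \ref{theorem_local_comparison_1} via doubling of variables with the new penalty functional, is assumed available. There is therefore no real obstacle here beyond the interpretation of the terminal condition noted above.
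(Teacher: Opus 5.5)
Your proposal is correct and follows the paper's own route. The paper obtains Theorem~\ref{theorem_global_uniqueness_1} by applying Theorem~\ref{theorem_global_comparison_1} twice with the roles of the two solutions swapped, and Theorem~\ref{theorem_global_comparison_1} is itself an immediate consequence of Theorem~\ref{theorem_local_comparison_1} on each $D_p$, continuity, and the density condition \eqref{D_p_property_2}. You were also right to read the statement as uniqueness under a common terminal condition: the paper leaves this implicit, and it is necessary, since for $H \equiv 0$ every constant functional is a viscosity solution.
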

    \begin{theorem} \label{theorem_global_uniqueness_2}
        Let $P$ be a non-empty set, let $\{D_p\}_{p \in P}$ be a family of non-empty compact subsets of $C$ satisfying \eqref{D_p_property_2}, \eqref{D_p_property_L}, and \eqref{D_property_v_p}, and let the Hamiltonian $H$ satisfy assumptions $(B.1)$ and $(B.3)$.
        Then, there exists at most one viscosity $\{D_p\}_{p \in P}$-solution $\varphi$ of \eqref{HJ} such that
        \begin{equation} \label{varphi_local_L}
            L_{\varphi, D_p}
            < + \infty
            \quad \forall p \in P.
        \end{equation}
    \end{theorem}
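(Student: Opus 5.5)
Theorem \ref{theorem_global_uniqueness_2} follows by applying Theorem \ref{theorem_global_comparison_2} twice, with the roles of the two solutions exchanged, and then using the density condition \eqref{D_p_property_2} together with the continuity of the solutions.

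Let $\varphi$ and $\tilde{\varphi}$ be two viscosity $\{D_p\}_{p \in P}$-solutions of \eqref{HJ}, and suppose both satisfy \eqref{varphi_local_L}, so that $L_{\varphi, D_p} \vee L_{\tilde{\varphi}, D_p} < + \infty$ for every $p \in P$.

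The first step is the terminal condition. I read the notion of solution in Theorem \ref{theorem_global_uniqueness_2} as including a fixed terminal value $\varphi(T, x(\cdot)) = \sigma(x(\cdot))$, since uniqueness cannot hold without one. In the source as quoted, that condition is implicit rather than stated. With this reading, $\varphi(T, \cdot)$ and $\tilde{\varphi}(T, \cdot)$ coincide, so \eqref{varphi_1_varphi_2_boundary_global} holds in both directions.

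The second step uses the definition of solution. By Definition \ref{definition_viscosity_D_p}, (iii), each of $\varphi$ and $\tilde{\varphi}$ is both a $\{D_p\}_{p \in P}$-subsolution and a $\{D_p\}_{p \in P}$-supersolution.

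The third step applies the comparison theorem. The family $\{D_p\}_{p \in P}$ satisfies \eqref{D_p_property_2}, \eqref{D_p_property_L} and \eqref{D_property_v_p}, and $H$ satisfies $(B.1)$ and $(B.3)$. So Theorem \ref{theorem_global_comparison_2} applies with $\varphi_1 = \varphi$ and $\varphi_2 = \tilde{\varphi}$, giving $\varphi \leq \tilde{\varphi}$ on $[0, T] \times C$. Applying it again with the roles swapped gives $\tilde{\varphi} \leq \varphi$. Hence $\varphi = \tilde{\varphi}$.

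The fourth step justifies Theorem \ref{theorem_global_comparison_2} itself, which the paper states as an immediate consequence of Theorem \ref{theorem_local_comparison_2}.
\begin{itemize}
\item Fix $p \in P$. Condition \eqref{D_property_v_p} shows that $D = D_p$ satisfies \eqref{D_property_v}, and hence also \eqref{D_property_1}.
\item The restrictions of $\varphi_1$ and $\varphi_2$ to $[0, T] \times D_p$ are a $D_p$-subsolution and a $D_p$-supersolution, respectively.
\item Assumptions $(A.1)$ and $(A.3)$ for $D = D_p$ are exactly $(B.1)$ and $(B.3)$ for this $p$.
\item The terminal inequality \eqref{varphi_1_varphi_2_boundary} on $D_p$ follows from \eqref{varphi_1_varphi_2_boundary_global}.
\end{itemize}
Theorem \ref{theorem_local_comparison_2} then gives $\varphi_1 \leq \varphi_2$ on $[0, T] \times D_p$ for every $p$, that is, on $[0, T] \times \bigcup_{p \in P} D_p$. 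The difference $\varphi_1 - \varphi_2$ is continuous on $[0, T] \times C$, and by \eqref{D_p_property_2} the set $[0, T] \times \bigcup_{p} D_p$ is dense in $[0, T] \times C$. So the inequality extends to the whole space.

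The only substantive work therefore lies in Theorem \ref{theorem_local_comparison_2}, whose proof is given in Section \ref{section_proofs}. I take it as given here, and the remaining bookkeeping is routine.
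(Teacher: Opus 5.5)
Your proposal is correct and follows the paper's own route: the paper obtains Theorem \ref{theorem_global_uniqueness_2} by applying Theorem \ref{theorem_global_comparison_2} in both directions, and that theorem in turn comes from Theorem \ref{theorem_local_comparison_2} on each $D_p$ together with continuity and the density condition \eqref{D_p_property_2}. You are also right that a common terminal condition on $\varphi(T,\cdot)$ has to be read into the statement. Without it uniqueness fails; for example, with $H \equiv 0$ every constant functional is a solution.
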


    Thus, Theorem \ref{theorem_global_uniqueness_1} guarantees uniqueness of a viscosity solution of \eqref{HJ} in the class of continuous functionals $\varphi$ under assumptions $(B.1)$ and $(B.2)$ on $H$.
    Theorem \ref{theorem_global_uniqueness_2}, on the other hand, establishes uniqueness under the weaker assumptions $(B.1)$ and $(B.3)$ on $H$, but in the narrower class of continuous functionals $\varphi$ satisfying the additional local Lipschitz condition \eqref{varphi_local_L}.

\subsection{Remarks}

    Theorems \ref{theorem_global_uniqueness_1} and \ref{theorem_global_uniqueness_2} generalize the results obtained earlier on (global) uniqueness of viscosity solutions of path-dependent Hamilton--Jacobi equations of form \eqref{HJ} in the case where a viscosity solution is defined via one or another family $\{D_p\}_{p \in P}$ of compact subsets of the space $C$ in the spirit of Definitions \ref{definition_viscosity_D} and \ref{definition_viscosity_D_p}.
    Remarks \ref{remark_1}--\ref{remark_4} below discuss this issue in a detailed way (observe that we use the notation adopted in the present paper, which can differ from the notation used in the cited results).

    \begin{remark} \label{remark_1}
        Theorem \ref{theorem_global_uniqueness_1} generalizes \cite[Theorem 4]{Lukoyanov_2010_IMM_Eng_1} (see also \cite[Theorem 8]{Gomoyunov_Lukoyanov_RMS_2024}; particular cases were established earlier in \cite[Theorem 5.1]{Lukoyanov_2007_DE_Eng} and \cite[Theorem 2]{Lukoyanov_2007_IMM_Eng}).
        Namely, in \cite[Theorem 4]{Lukoyanov_2010_IMM_Eng_1}, the Hamiltonian $H$ was assumed to satisfy the following conditions.
        \begin{itemize}
            \item[$(C.1)$]
            The mapping $H$ is continuous.

            \item[$(C.2)$]
            There exists a number $c_H > 1$ such that
            \begin{align*}
                |H(t, x(\cdot), 0)|
                & \leq c_H (1 + \|x(\cdot \wedge t)\|_\infty), \\
                |H(t, x(\cdot), s) - H(t, x(\cdot), r)|
                & \leq c_H (1 + \|x(\cdot \wedge t)\|_\infty) \|s - r\|
            \end{align*}
            for all $t \in [0, T]$, $x(\cdot) \in C$, $s$, $r \in \mathbb{R}^n$.

            \item[$(C.3)$]
            There exist numbers $J \in \mathbb{N}$ and $\{\vartheta_j\}_{j \in \overline{1, J}} \subset (0, h]$ such that, for every compact set $D \subset C$, there exists a number $L_{H, D} > 1$ such that
            \begin{align*}
                & |H(t, x(\cdot), s) - H(t, y(\cdot), s)|
                \leq L_{H, D} (1 + \|s\|) \\
                & \ \ \times \left( \|x(t) - y(t)\|
                + \sum_{j = 1}^{J} \|x(t - \vartheta_j) - y(t - \vartheta_j)\|
                + \sqrt{\int_{- h}^{t} \|x(\xi) - y(\xi)\|^2 \, \rd \xi} \right)
            \end{align*}
            for all $t \in [0, T]$, $x(\cdot)$, $y(\cdot) \in D$, $s \in \mathbb{R}^n$.
        \end{itemize}
        Viscosity solutions were defined in \cite{Lukoyanov_2010_IMM_Eng_1} in terms of a sequence $\{D_k\}_{k \in \mathbb{N}}$ such that, for every $k \in \mathbb{N}$, the set $D_k$ consists of all Lipschitz continuous functions $x \colon [- h, T] \to \mathbb{R}^n$ such that $\|x(- h)\| \leq k$ and $\|\dot{x}(\xi)\| \leq k c_H (1 + \|x(\cdot \wedge \xi)\|_\infty)$ for a.e. $\xi \in [- h, T]$.
        Note that this sequence $\{D_k\}_{k \in \mathbb{N}}$ satisfies conditions \eqref{D_p_property_2}, \eqref{D_p_property_L}, and \eqref{D_property_v_p}.
        Thus, the advantages of Theorem \ref{theorem_global_uniqueness_1} over \cite[Theorem 4]{Lukoyanov_2010_IMM_Eng_1} are the following:
        first, assumption $(B.2)$ on local Lipschitz continuity of the Hamiltonian $H$ in the functional variable $x(\cdot)$ is expressed in terms of the supremum norm $\|\cdot\|_\infty$, which is weaker than $(C.3)$ and allows to cover more general classes of time-delay systems in applications;
        second, no assumptions on Lipschitz continuity of $H$ in the gradient variable $s$ such as $(C.2)$ are required.
    \end{remark}

    \begin{remark} \label{remark_2}
        Theorem \ref{theorem_global_uniqueness_1} also generalizes \cite[Theorem 3.2]{Hernandez-Hernandez_Kaise_2024} (see also \cite[Theorem 2.4]{Kaise_2025}).
        In that paper, viscosity solutions were defined via a sequence of compact sets $\{D_k\}_{k \in \mathbb{N}}$ constructed in the same way as in Remark \ref{remark_1} but with the number $c_H$ replaced by a given number $\alpha \geq 1$.
        The Hamiltonian $H$ was assumed to satisfy the condition below.
        \begin{itemize}
            \item[$(D)$]
                There exist numbers $L_H > 0$ and $M_H > 0$ and a family of Lipschitz moduli of continuity $\{\omega^{t, x(\cdot)}\}_{(t, x(\cdot)) \in [0, T] \times C}$ (see \cite[Definition 2.2]{Hernandez-Hernandez_Kaise_2024}) such that
                \begin{align*}
                    & |H(t, x(\cdot), s) - H(\tau, y(\cdot), r)|
                    \leq L_H (1 + \|s\|) \\
                    & \quad \times \bigl( (1 + \|x(\cdot \wedge t)\|_\infty) \omega^{t, x(\cdot)}(\tau - t) + \|x(\cdot \wedge t) - y(\cdot \wedge \tau)\|_\infty \bigr)
                    + M_H \|s - r\|
                \end{align*}
                for all $t$, $\tau \in [0, T]$ with $\tau \geq t$ and all $x(\cdot)$, $y(\cdot) \in C$, $s$, $r \in \mathbb{R}^n$.
        \end{itemize}
        Uniqueness was proved in the class of functionals $\varphi$ satisfying the following global Lipschitz condition:
        there exist numbers $L_\varphi > 0$ and $M_\varphi > 0$ such that
        \begin{align*}
            |\varphi(t, x(\cdot)) - \varphi(t, y(\cdot))|
            & \leq L_\varphi \|x(\cdot \wedge t) - y(\cdot \wedge t)\|_\infty, \\
            |\varphi(t, x(\cdot)) - \varphi(\tau, x(\cdot \wedge t))|
            & \leq M_\varphi (\tau - t)
        \end{align*}
        for all $t$, $\tau \in [0, T]$ with $\tau \geq t$ and all $x(\cdot)$, $y(\cdot) \in C$.
        Thus, Theorem \ref{theorem_global_uniqueness_1} improves upon \cite[Theorem 3.2]{Hernandez-Hernandez_Kaise_2024} in two respects:
        first, it imposes weaker assumptions on the Hamiltonian $H$;
        second, it establishes uniqueness in the wider class of merely continuous functionals $\varphi$, without requiring any Lipschitz continuity properties of $\varphi$.
        In addition, note that the definition of a viscosity solution adopted in \cite{Hernandez-Hernandez_Kaise_2024} slightly differs from Definitions \ref{definition_viscosity_D} and \ref{definition_viscosity_D_p}.
        Namely, it has an extra requirement that, for a test functional $\psi$ and a point $(t, x(\cdot))$ from conditions \eqref{subsolution_condition} and \eqref{supersolution_condition}, the inequality $\|\nabla_{x(\cdot)} \psi(t, x(\cdot))\| \leq L_\varphi + 2 M_\varphi$ should be fulfilled.
        However, the proof of Theorem \ref{theorem_local_comparison_2} shows that, in the case of functionals $\varphi$ satisfying the local Lipschitz condition \eqref{varphi_local_L}, similar a priory estimates can also be included into the definitions (in this connection, see also Lemma \ref{lemma}).
    \end{remark}

    \begin{remark} \label{remark_3}
        Theorem \ref{theorem_global_uniqueness_2} generalizes \cite[Theorem 5.2]{Kaise_2022} (particular cases were established earlier in \cite[Theorem 3.3]{Kaise_2015} and \cite[Theorem 5.6]{Kaise_Kato_Takahashi_2018}).
        In \cite{Kaise_2022}, viscosity solutions were defined as in Remark \ref{remark_2}, and it was assumed that the Hamiltonian $H$ satisfies the condition below.
        \begin{itemize}
            \item[$(E)$]
                There exists a family of Lipschitz moduli of continuity  $\{\omega^{t, x(\cdot)}\}_{(t, x(\cdot)) \in [0, T] \times C}$ such that, for every compact set $D \subset C$ and every number $R > 0$, there exists a number $L_{H, D, R} > 0$ such that
                \begin{equation} \label{D}
                    \begin{aligned}
                        |H(t, x(\cdot), s) - H(t, y(\cdot), s)|
                        & \leq L_{H, D, R} \|x(\cdot \wedge t) - y(\cdot \wedge t)\|_\infty, \\
                        |H(t, x(\cdot), s) - H(\tau, x(\cdot \wedge t), s)|
                        & \leq L_{H, D, R} \, \omega^{t, x(\cdot)}(\tau - t), \\
                        |H(t, x(\cdot), s) - H(t, x(\cdot), r)|
                        & \leq L_{H, D, R} \|s - r\|
                    \end{aligned}
                \end{equation}
                for all $t$, $\tau \in [0, T]$ with $\tau \geq t$ and all $x(\cdot)$, $y(\cdot) \in D$, $s$, $r \in B(R)$.
        \end{itemize}
        Uniqueness was proved in the class of functionals $\varphi$ satisfying the following global Lipschitz condition:
        there exist numbers $L_\varphi > 0$ and $M_\varphi > 0$ such that
        \begin{align*}
            |\varphi(t, x(\cdot)) - \varphi(t, y(\cdot))|
            & \leq L_\varphi (1 + \|x(\cdot \wedge t)\|_\infty + \|y(\cdot \wedge t)\|_\infty) \|x(\cdot \wedge t) - y(\cdot \wedge t)\|_\infty, \\
            |\varphi(t, x(\cdot)) - \varphi(\tau, x(\cdot \wedge t))|
            & \leq M_\varphi (1 + \|x(\cdot \wedge t)\|_\infty^2) (\tau - t)
        \end{align*}
        for all $t$, $\tau \in [0, T]$ with $\tau \geq t$ and all $x(\cdot)$, $y(\cdot) \in C$.
        So, the advantages of Theorem \ref{theorem_global_uniqueness_2} over \cite[Theorem 5.2]{Kaise_2022} are the following:
        first, no assumptions like Lipschitz continuity in the time variable $t$ and in the gradient variable $s$ such as the second and third conditions in \eqref{D} are required;
        second, uniqueness is established in the wider class of continuous functionals $\varphi$ satisfying the local Lipschitz condition \eqref{varphi_local_L}.
    \end{remark}

    \begin{remark} \label{remark_4}
        A result close to Theorems \ref{theorem_global_uniqueness_1} and \ref{theorem_global_uniqueness_2} was announced in \cite[Theorem 5.1]{Zhou_2020_1}.
        In that paper, viscosity solutions were defined in terms of a family $\{D_{M, \mu}\}_{M > 0, \mu > 0}$ such that, for any numbers $M > 0$ and $\mu > 0$, the set $D_{M, \mu}$ consists of all functions $x \colon [- h, T] \to \mathbb{R}^n$ such that $\|x(\cdot)\|_\infty \leq M$ and
        \begin{equation*}
            \sup \biggl\{ \frac{\|x(\xi_1) - x(\xi_2)\|}{|\xi_1 - \xi_2|}
            \colon \xi_1, \xi_2 \in [- h, T], \ \xi_1 \neq \xi_2 \biggr\}
            \leq \mu (1 + M).
        \end{equation*}
        It was assumed that the Hamiltonian $H$ is of so-called Bellman form
        \begin{equation*}
            H(t, x(\cdot), s)
            \coloneq \inf_{u \in U} \bigl( \langle f(t, x(\cdot), u), s \rangle + g(t, x(\cdot), u) \bigr)
            \quad \forall t \in [0, T], \ x(\cdot) \in C, \ s \in \mathbb{R}^n,
        \end{equation*}
        where $U$ is a metric space, the mappings $(f, g) \colon [0, T] \times C \times U \to \mathbb{R}^n \times \mathbb{R}$ are continuous, and there exists a number $L_{f, g} > 0$ such that
        \begin{align*}
            \|f(t, x(\cdot), u) - f(t, y(\cdot), u)\|
            \vee |g(t, x(\cdot), u) - g(t, y(\cdot), u)|
            & \leq L_{f, g} \|x(\cdot \wedge t) - y(\cdot \wedge t)\|_\infty, \\
            \|f(t, x(\cdot), u)\| \vee |g(t, x(\cdot), u)|
            & \leq L_{f, g} (1 + \|x(\cdot \wedge t)\|_\infty)
        \end{align*}
        for all $t \in [0, T]$, $x(\cdot)$, $y(\cdot) \in C$, $u \in U$.
        Uniqueness for viscosity solutions was established in the class of functionals $\varphi$ satisfying the following global growth and Lipschitz conditions:
        there exists a number $L_\varphi > 0$ such that
        \begin{align*}
            |\varphi(t, x(\cdot))|
            & \leq L_\varphi (1 + \|x(\cdot \wedge t)\|_\infty), \\
            |\varphi(t, x(\cdot)) - \varphi(\tau, y(\cdot))|
            & \leq L_\varphi (1 + \|x(\cdot \wedge t)\|_\infty \vee \|y(\cdot \wedge \tau)\|_\infty) \\
            & \quad \times (|t - \tau| + \|x(\cdot \wedge t) - y(\cdot \wedge \tau)\|_\infty)
        \end{align*}
        for all $t$, $\tau \in [0, T]$, $x(\cdot)$, $y(\cdot) \in C$.
        Thus, compared to \cite[Theorem 5.1]{Zhou_2020_1}, Theorems \ref{theorem_global_uniqueness_1} and \ref{theorem_global_uniqueness_2} guarantee uniqueness under the weaker assumptions on the Hamiltonian $H$ and in the wider classes of functionals $\varphi$.
        Furthermore, the presented in Section \ref{section_proofs} proofs of Theorems \ref{theorem_local_comparison_1} and \ref{theorem_local_comparison_2}, from which Theorems \ref{theorem_global_uniqueness_1} and \ref{theorem_global_uniqueness_2} follow directly, appear simpler and more straightforward than the proof of \cite[Theorem 5.1]{Zhou_2020_1}.
        In addition, note that the definition of a viscosity solution adopted in \cite{Zhou_2020_1} slightly differs from Definitions \ref{definition_viscosity_D} and \ref{definition_viscosity_D_p}.
        Namely, the maximum and minimum in \eqref{subsolution_condition} and \eqref{supersolution_condition} respectively are considered over the set $[t, T] \times D_{M, \mu}$ (instead of $[0, T] \times D_{M, \mu}$) and it is additionally required that, for the point $(t, x(\cdot))$ from \eqref{subsolution_condition} and \eqref{supersolution_condition}, the inequality $\|x(t)\| < M$ is fulfilled.
    \end{remark}

\section{Penalty functional}
\label{section_functional}

    This section is devoted to the construction of a special functional used in the proofs of Theorems \ref{theorem_local_comparison_1} and \ref{theorem_local_comparison_2} as a penalty functional (see Section \ref{section_proofs}).
    This functional, denoted by $V^L$ in what follows, depends on the parameter $L \geq 0$, which we fix from now on.

    The construction is carried out in four steps.

    \smallskip

    \textit{Step 1.}
    Consider mappings $(V_1, P_1, Q_1) \colon [0, T] \times C \times [0, T] \times C \to \mathbb{R} \times \mathbb{R} \times \mathbb{R}^n$ that are defined for all $(t, x(\cdot), \tau, y(\cdot)) \in [0, T] \times C \times [0, T] \times C$ by
    \begin{equation} \label{V_1}
        \begin{aligned}
            V_1(t, x(\cdot), \tau, y(\cdot))
            & \coloneq (2 L^2 + 1) (t - \tau)^2
            + 2 \|x(t) - y(\tau)\|^2, \\
            P_1(t, x(\cdot), \tau, y(\cdot))
            & \coloneq 2 (2 L^2 + 1) (t - \tau), \\
            Q_1(t, x(\cdot), \tau, y(\cdot))
            & \coloneq 4 (x(t) - y(\tau)).
        \end{aligned}
    \end{equation}
    Note that the mappings $(V_1, P_1, Q_1)$ are continuous, the functional $V_1$ is non-negative, and, for every $(t, x(\cdot), \tau, y(\cdot)) \in [0, T] \times C \times [0, T] \times C$,
    \begin{equation} \label{V_1_symmetry}
        \begin{aligned}
            V_1(t, x(\cdot), \tau, y(\cdot))
            & = V_1(\tau, y(\cdot), t, x(\cdot)), \\
            (P_1, Q_1)(t, x(\cdot), \tau, y(\cdot))
            & = - (P_1, Q_1)(\tau, y(\cdot), t, x(\cdot))
        \end{aligned}
    \end{equation}
    and
    \begin{equation} \label{V_1_non-anticipative}
        (V_1, P_1, Q_1)(t, x(\cdot), \tau, y(\cdot))
        = (V_1, P_1, Q_1)(t, x(\cdot \wedge t), \tau, y(\cdot \wedge \tau)).
    \end{equation}
    In addition, it can be directly verified (see also, e.g., \cite[p. 257]{Gomoyunov_Lukoyanov_RMS_2024}) that, for every point $(\hat{\tau}, \hat{y}(\cdot)) \in [0, T] \times C$, the functional
    \begin{equation} \label{hat_v_1}
        \hat{v}_1(t, x(\cdot))
        \coloneq V_1(t, x(\cdot), \hat{\tau}, \hat{y}(\cdot))
        \quad \forall (t, x(\cdot)) \in [0, T] \times C
    \end{equation}
    is $ci$-smooth and, for all $(t, x(\cdot)) \in [0, T) \times C$,
    \begin{equation} \label{V_1_derivatives}
        \partial_t \hat{v}_1(t, x(\cdot))
        = P_1 (t, x(\cdot), \hat{\tau}, \hat{y}(\cdot)),
        \quad \nabla_{x(\cdot)} \hat{v}_1(t, x(\cdot))
        = Q_1 (t, x(\cdot), \hat{\tau}, \hat{y}(\cdot)).
    \end{equation}

    \smallskip

    \textit{Step 2.}
    Further, consider a functional $V_2 \colon [0, T] \times C \times [0, T] \times C \to \mathbb{R}$ defined for all $(t, x(\cdot), \tau, y(\cdot)) \in [0, T] \times C \times [0, T] \times C$ by
    \begin{equation} \label{V_2}
        V_2(t, x(\cdot), \tau, y(\cdot))
        \coloneq V_1(t, x(\cdot), \tau, y(\cdot))
        \vee \|x(\cdot \wedge t \wedge \tau) - y(\cdot \wedge t \wedge \tau) \|_\infty^2.
    \end{equation}
    Here, in accordance with the adopted notation,
    \begin{equation*}
        \|x(\cdot \wedge t \wedge \tau) - y(\cdot \wedge t \wedge \tau) \|_\infty
        = \max_{\xi \in [- h, t \wedge \tau]} \|x(\xi) - y(\xi)\|.
    \end{equation*}
    Note that the functional $V_2$ is continuous and non-negative and
    \begin{equation*}
        V_2(t, x(\cdot), \tau, y(\cdot))
        = V_2(\tau, y(\cdot), t, x(\cdot)),
        \quad V_2(t, x(\cdot), \tau, y(\cdot))
        = V_2(t, x(\cdot \wedge t), \tau, y(\cdot \wedge \tau))
    \end{equation*}
    for all $(t, x(\cdot), \tau, y(\cdot)) \in [0, T] \times C \times [0, T] \times C$.

    \begin{proposition} \label{proposition_V_2}
        The following statements hold.
        \begin{itemize}
            \item[\rm (i)]
            For every $(t, x(\cdot), \tau, y(\cdot)) \in [0, T] \times C \times [0, T] \times C$, the equality $V_2(t, x(\cdot), \tau, y(\cdot)) = 0$ takes place if and only if $(t, x(\cdot \wedge t)) = (\tau, y(\cdot \wedge \tau))$.

            \item[\rm (ii)]
            For every $(t, x(\cdot), \tau, y(\cdot)) \in [0, T] \times C \times [0, T] \times C$ with
            \begin{equation*}
                \tau
                \geq t,
                \quad \|y(\tau) - y(\xi)\|
                \leq L (\tau - t)
                \quad \forall \xi \in [t, \tau],
            \end{equation*}
            the inequality below is valid:
            \begin{equation} \label{proposition_V_2_main}
                V_2(t, x(\cdot), \tau, y(\cdot))
                \geq \|x(\cdot \wedge t) - y(\cdot \wedge \tau)\|_\infty^2.
            \end{equation}
        \end{itemize}
    \end{proposition}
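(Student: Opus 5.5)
For (i), the plan is to read off the two terms in the maximum defining $V_2$. If $V_2 = 0$, then $V_1 = 0$, so $t = \tau$ (the coefficient $2L^2+1 > 0$) and $x(t) = y(\tau)$. The second term also vanishes, so $x(\xi) = y(\xi)$ for all $\xi \in [-h, t \wedge \tau] = [-h, t]$. Since $t = \tau$, this gives $x(\cdot \wedge t) = y(\cdot \wedge \tau)$. Conversely, if $t = \tau$ and $x(\cdot \wedge t) = y(\cdot \wedge t)$, then $x(t) = y(\tau)$ and the sup-term vanishes, so $V_2 = 0$. This part is routine.

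For (ii), we have $t \le \tau$, so $t \wedge \tau = t$. We need to bound $\|x(\xi \wedge t) - y(\xi \wedge \tau)\|^2$ by $V_2$ for every $\xi \in [-h, T]$. I split into three ranges of $\xi$.

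\emph{Range $\xi \le t$.} The quantity is $\|x(\xi) - y(\xi)\|^2$, which is at most the second term of the maximum, hence at most $V_2$.

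\emph{Range $\xi \ge \tau$.} The quantity is $\|x(t) - y(\tau)\|^2 \le 2\|x(t) - y(\tau)\|^2 \le V_1 \le V_2$.

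\emph{Range $\xi \in (t, \tau)$.} This is the main step. The quantity is $\|x(t) - y(\xi)\|^2$. Using $(a+b)^2 \le 2a^2 + 2b^2$ and the hypothesis $\|y(\tau) - y(\xi)\| \le L(\tau - t)$, we get
\begin{equation*}
    \|x(t) - y(\xi)\|^2
    \le 2\|x(t) - y(\tau)\|^2 + 2\|y(\tau) - y(\xi)\|^2
    \le 2\|x(t) - y(\tau)\|^2 + 2L^2(\tau - t)^2
    \le V_1 .
\end{equation*}
The last inequality holds because $V_1$ contains $(2L^2+1)(\tau - t)^2 \ge 2L^2(\tau - t)^2$ together with $2\|x(t) - y(\tau)\|^2$. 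This is exactly why the constants $2L^2 + 1$ and $2$ appear in $V_1$.

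Taking the maximum over $\xi$ then gives \eqref{proposition_V_2_main}. The only real obstacle is the middle interval, and the computation above handles it directly.
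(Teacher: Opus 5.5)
Your proof is correct and follows the paper's argument. Part (i) reads off both terms of the maximum. Part (ii) splits the supremum at $t$ and bounds $\|x(t)-y(\xi)\|^2$ for $\xi\in[t,\tau]$ by $V_1$, using the triangle inequality and $(a+b)^2\le 2a^2+2b^2$; your separate range $\xi\ge\tau$ is just the endpoint $\xi=\tau$ of the paper's interval $[t,\tau]$.
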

    \begin{proof}
        (i)
            In the case where $V_2(t, x(\cdot), \tau, y(\cdot)) = 0$, we have
            \begin{equation*}
                V_1(t, x(\cdot), \tau, y(\cdot))
                = 0,
                \quad \|x(\cdot \wedge t \wedge \tau) - y(\cdot \wedge t \wedge \tau) \|_\infty
                = 0,
            \end{equation*}
            which imply that $t = \tau$ and $\|x(\cdot \wedge t) - y(\cdot \wedge \tau)\|_\infty = 0$.
            Hence, $(t, x(\cdot \wedge t)) = (\tau, y(\cdot \wedge \tau))$.
            On the other hand, if $(t, x(\cdot \wedge t)) = (\tau, y(\cdot \wedge \tau))$,
            we get
            \begin{equation*}
                V_1(t, x(\cdot), \tau, y(\cdot))
                = 0,
                \quad \|x(\cdot \wedge t \wedge \tau) - y(\cdot \wedge t \wedge \tau)\|_\infty
                = \|x(\cdot \wedge t) - y(\cdot \wedge \tau)\|_\infty
                = 0.
            \end{equation*}
            Then, $V_2(t, x(\cdot), \tau, y(\cdot)) = 0$.

        (ii)
            Since $\tau \geq t$,
            \begin{equation*}
                \|x(\cdot \wedge t) - y(\cdot \wedge \tau)\|_\infty^2
                = \|x(\cdot \wedge t \wedge \tau) - y(\cdot \wedge t \wedge \tau) \|_\infty^2
                \vee \max_{\xi \in [t, \tau]} \|x(t) - y(\xi)\|^2.
            \end{equation*}
            For every $\xi \in [t, \tau]$, we derive
            \begin{equation*}
                \|x(t) - y(\xi)\|
                \leq \|x(t) - y(\tau)\| + \|y(\tau) - y(\xi)\|
                \leq \|x(t) - y(\tau)\| + L (\tau - t),
            \end{equation*}
            and, therefore, recalling definition \eqref{V_1} of the functional $V_1$, we get
            \begin{align*}
                \|x(t) - y(\xi)\|^2
                & \leq (\|x(t) - y(\tau)\| + L (\tau - t))^2 \\
                & \leq 2 \|x(t) - y(\tau)\|^2 + 2 L^2 (\tau - t)^2 \\
                & \leq V_1(t, x(\cdot), \tau, y(\cdot)).
            \end{align*}
            Thus, we come to the desired inequality \eqref{proposition_V_2_main} and complete the proof.
    \end{proof}

    \smallskip

    \textit{Step 3.}
    Now, consider mappings $(V_3, P_3, Q_3) \colon [0, T] \times C \times [0, T] \times C \to \mathbb{R} \times \mathbb{R} \times \mathbb{R}^n$ defined for all $(t, x(\cdot), \tau, y(\cdot)) \in [0, T] \times C \times [0, T] \times C$ by (see also \eqref{V_1})
    \begin{equation} \label{V_3_1}
        \begin{aligned}
            V_3(t, x(\cdot), \tau, y(\cdot))
            & \coloneq \frac{\bigl( V_2(t, x(\cdot), \tau, y(\cdot))
            - V_1(t, x(\cdot), \tau, y(\cdot)) \bigr)^2}
            {V_2(t, x(\cdot), \tau, y(\cdot))}, \\
            P_3(t, x(\cdot), \tau, y(\cdot))
            & \coloneq - 2 \frac{V_2(t, x(\cdot), \tau, y(\cdot))
            - V_1(t, x(\cdot), \tau, y(\cdot))}
            {V_2(t, x(\cdot), \tau, y(\cdot))}
             P_1(t, x(\cdot), \tau, y(\cdot)) \\
            & = - 4 (2 L^2 + 1) \frac{V_2(t, x(\cdot), \tau, y(\cdot))
            - V_1(t, x(\cdot), \tau, y(\cdot))}
            {V_2(t, x(\cdot), \tau, y(\cdot))}
            (t - \tau), \\
            Q_3(t, x(\cdot), \tau, y(\cdot))
            & \coloneq - 2 \frac{V_2(t, x(\cdot), \tau, y(\cdot))
            - V_1(t, x(\cdot), \tau, y(\cdot))}
            {V_2(t, x(\cdot), \tau, y(\cdot))}
            Q_1(t, x(\cdot), \tau, y(\cdot)) \\
            & = - 8 \frac{V_2(t, x(\cdot), \tau, y(\cdot))
            - V_1(t, x(\cdot), \tau, y(\cdot))}
            {V_2(t, x(\cdot), \tau, y(\cdot))}
            (x(t) - y(\tau))
        \end{aligned}
    \end{equation}
    if $(t, x(\cdot \wedge t)) \neq (\tau, y(\cdot \wedge \tau))$ and by
    \begin{equation} \label{V_3_2}
        (V_3, P_3, Q_3)(t, x(\cdot), \tau, y(\cdot))
        \coloneq (0, 0, 0)
    \end{equation}
    if $(t, x(\cdot \wedge t)) = (\tau, y(\cdot \wedge \tau))$.
    The mappings $(V_3, P_3, Q_3)$ are well-defined since, for every $(t, x(\cdot), \tau, y(\cdot)) \in [0, T] \times C \times [0, T] \times C$ such that $(t, x(\cdot \wedge t)) \neq (\tau, y(\cdot \wedge \tau))$, we have $V_2(t, x(\cdot), \tau, y(\cdot)) \neq 0$ by Proposition \ref{proposition_V_2}, (i).
    Note that the functional $V_3$ is non-negative, and, for every $(t, x(\cdot), \tau, y(\cdot)) \in [0, T] \times C \times [0, T] \times C$,
    \begin{equation} \label{V_3_symmetry}
        \begin{aligned}
            V_3(t, x(\cdot), \tau, y(\cdot))
            & = V_3(\tau, y(\cdot), t, x(\cdot)), \\
            (P_3, Q_3)(t, x(\cdot), \tau, y(\cdot))
            & = - (P_3, Q_3)(\tau, y(\cdot), t, x(\cdot))
        \end{aligned}
    \end{equation}
    and
    \begin{equation} \label{V_3_non-anticipative}
        (V_3, P_3, Q_3)(t, x(\cdot), \tau, y(\cdot))
        = (V_3, P_3, Q_3)(t, x(\cdot \wedge t), \tau, y(\cdot \wedge \tau)).
    \end{equation}
    Moreover, for every $(t, x(\cdot), \tau, y(\cdot)) \in [0, T] \times C \times [0, T] \times C$, the inequalities below hold:
    \begin{equation} \label{V_3_estimates}
        \begin{aligned}
            V_3(t, x(\cdot), \tau, y(\cdot))
            & \leq V_2(t, x(\cdot), \tau, y(\cdot)) \\
            |P_3(t, x(\cdot), \tau, y(\cdot))|
            & \leq 2 |P_1(t, x(\cdot), \tau, y(\cdot))|
            \leq 4 (2 L^2 + 1) |t - \tau|, \\
            \|Q_3(t, x(\cdot), \tau, y(\cdot))\|
            & \leq 2 \|Q_1(t, x(\cdot), \tau, y(\cdot))\|
            \leq 8 \|x(t) - y(\tau)\|.
        \end{aligned}
    \end{equation}
    Indeed, in the case where $(t, x(\cdot \wedge t)) = (\tau, y(\cdot \wedge \tau))$, these inequalities are valid due to \eqref{V_3_2}, and if $(t, x(\cdot \wedge t)) \neq (\tau, y(\cdot \wedge \tau))$, they follow from the inequalities (see \eqref{V_2})
    \begin{equation*}
        0
        \leq \frac{V_2(t, x(\cdot), \tau, y(\cdot)) - V_1(t, x(\cdot), \tau, y(\cdot))}{V_2(t, x(\cdot), \tau, y(\cdot))}
        \leq 1.
    \end{equation*}

    \begin{proposition} \label{proposition_V_3_continuity}
        The mappings $(V_3, P_3, Q_3)$ are continuous.
    \end{proposition}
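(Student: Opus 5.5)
Consider the set $\Delta \coloneq \{(t, x(\cdot), \tau, y(\cdot)) \colon (t, x(\cdot \wedge t)) = (\tau, y(\cdot \wedge \tau))\}$, which is exactly the zero set of $V_2$ by Proposition \ref{proposition_V_2}, (i), and split the proof according to whether a point lies in $\Delta$ or not.

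At a point $(t_0, x_0(\cdot), \tau_0, y_0(\cdot)) \notin \Delta$, we have $V_2(t_0, x_0(\cdot), \tau_0, y_0(\cdot)) > 0$. Since $V_2$ is continuous, it stays positive in a neighbourhood, so the complement of $\Delta$ is open. On that neighbourhood, $V_3$, $P_3$, $Q_3$ are given by formulas \eqref{V_3_1}. These are compositions of the continuous mappings $V_1$, $V_2$, $P_1$, $Q_1$ with algebraic operations, where the only division is by the nonvanishing $V_2$. Hence they are continuous at such points.

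At a point $(t_0, x_0(\cdot), \tau_0, y_0(\cdot)) \in \Delta$, the values are $(0,0,0)$ by \eqref{V_3_2}, so it suffices to show that $V_3$, $P_3$, $Q_3$ tend to zero along any sequence converging to this point. We use the bounds \eqref{V_3_estimates}. First, $0 \le V_3 \le V_2$, and $V_2$ is continuous and vanishes at the limit point, so $V_3 \to 0$. Second, $|P_3| \le 4(2L^2+1)|t - \tau|$ and $\|Q_3\| \le 8\|x(t) - y(\tau)\|$. Both right-hand sides are continuous in all variables: for the second, $(t, x(\cdot)) \mapsto x(t)$ is continuous on $[0,T] \times C$ because the functions are uniformly continuous and convergence in $C$ is uniform. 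At a point of $\Delta$ we have $t_0 = \tau_0$ and $x_0(t_0) = y_0(\tau_0)$, so both right-hand sides vanish there. Therefore $P_3 \to 0$ and $Q_3 \to 0$.

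No step is really an obstacle. The only point needing care is to rely on the estimates \eqref{V_3_estimates}, which already hold on all of $[0,T]\times C\times[0,T]\times C$, rather than trying to control the quotient $(V_2 - V_1)/V_2$ near $\Delta$. That quotient need not have a limit there, but it is bounded between $0$ and $1$, which is all that is required.
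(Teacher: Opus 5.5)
Your proof is correct and follows the paper's route: off the zero set of $V_2$, continuity comes from the explicit formulas, and at points where $(t, x(\cdot \wedge t)) = (\tau, y(\cdot \wedge \tau))$ you squeeze using $0 \le V_3 \le V_2$, $|P_3| \le 2|P_1|$, $\|Q_3\| \le 2\|Q_1\|$. You also state explicitly that the complement of $\Delta$ is open, so the formulas hold on a whole neighbourhood; the paper leaves this step implicit.
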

    \begin{proof}
        Let a point $(t, x(\cdot), \tau, y(\cdot)) \in [0, T] \times C \times [0, T] \times C$ be fixed.
        In the case where $(t, x(\cdot \wedge t)) \neq (\tau, y(\cdot \wedge \tau))$, continuity of $(V_3, P_3, Q_3)$ at $(t, x(\cdot), \tau, y(\cdot))$ follows directly from continuity of the mappings $(V_1, P_1, Q_1)$ and $V_2$.
        Suppose that $(t, x(\cdot \wedge t)) = (\tau, y(\cdot \wedge \tau))$ and consider a sequence $\{(t_k, x_k(\cdot), \tau_k, y_k(\cdot))\}_{k \in \mathbb{N}} \subset [0, T] \times C \times [0, T] \times C$ such that
        \begin{equation*}
            \lim_{k \to \infty} \bigl( |t_k - t| + \|x_k(\cdot) - x(\cdot)\|_\infty
            + |\tau_k - \tau| + \|y_k(\cdot) - y(\cdot)\|_\infty \bigr)
            = 0.
        \end{equation*}

        In view of \eqref{V_3_estimates}, we derive
        \begin{equation*}
            0
            \leq V_3(t_k, x_k(\cdot), \tau_k, y_k(\cdot))
            \leq V_2(t_k, x_k(\cdot), \tau_k, y_k(\cdot))
        \end{equation*}
        for all $k \in \mathbb{N}$.
        By continuity of $V_2$ and according to Proposition \ref{proposition_V_2}, (i),
        \begin{equation*}
            \lim_{k \to \infty} V_2(t_k, x_k(\cdot), \tau_k, y_k(\cdot))
            = V_2(t, x(\cdot), \tau, y(\cdot))
            = 0.
        \end{equation*}
        Hence, we obtain (see \eqref{V_3_2})
        \begin{equation*}
            \lim_{k \to \infty} V_3(t_k, x_k(\cdot), \tau_k, y_k(\cdot))
            = 0
            = V_3(t, x(\cdot), \tau, y(\cdot)).
        \end{equation*}

        In a similar way, we have (see \eqref{V_3_estimates})
        \begin{align*}
            |P_3(t_k, x_k(\cdot), \tau_k, y_k(\cdot))|
            & \leq 2 |P_1(t_k, x_k(\cdot), \tau_k, y_k(\cdot))|, \\
            \| Q_3(t_k, x_k(\cdot), \tau_k, y_k(\cdot)) \|
            & \leq 2 \|Q_1(t_k, x_k(\cdot), \tau_k, y_k(\cdot))\|
        \end{align*}
        for all $k \in \mathbb{N}$.
        By continuity of $(P_1, Q_1)$ and since $(t, x(t)) = (\tau, y(\tau))$, we get (see \eqref{V_1})
        \begin{align*}
            \lim_{k \to \infty} |P_1(t_k, x_k(\cdot), \tau_k, y_k(\cdot))|
            & = |P_1(t, x(\cdot), \tau, y(\cdot))|
            = 0, \\
            \lim_{k \to \infty} \|Q_1(t_k, x_k(\cdot), \tau_k, y_k(\cdot))\|
            & = \|Q_1(t, x(\cdot), \tau, y(\cdot))\|
            = 0.
        \end{align*}
        Therefore, we derive (see \eqref{V_3_2})
        \begin{equation*}
            \lim_{k \to \infty} (P_3, Q_3)(t_k, x_k(\cdot), \tau_k, y_k(\cdot))
            = (0, 0)
            = (P_3, Q_3)(t, x(\cdot), \tau, y(\cdot)),
        \end{equation*}
        which concludes the proof.
    \end{proof}

    \begin{proposition} \label{proposition_V_3_differentiability}
        Fix $(\hat{\tau}, \hat{y}(\cdot)) \in [0, T] \times C$ and consider the functional
        \begin{equation*}
            \hat{v}_3(t, x(\cdot))
            \coloneq V_3(t, x(\cdot), \hat{\tau}, \hat{y}(\cdot))
            \quad \forall (t, x(\cdot)) \in [0, T] \times C.
        \end{equation*}
        Let a point $(t, x(\cdot)) \in [0, T) \times C$ be such that
        \begin{equation} \label{hat_V_3_case_1}
            t < \hat{\tau},
            \quad \|\hat{y}(\xi) - \hat{y}(t)\|
            \leq L (\xi - t)
            \quad \forall \xi \in [t, \hat{\tau}]
        \end{equation}
        or
        \begin{equation} \label{hat_V_3_case_2}
            t \geq \hat{\tau}.
        \end{equation}
        Then, $\hat{v}_3$ is $ci$-differentiable at the point $(t, x(\cdot))$ and
        \begin{equation} \label{hat_v_3_derivatives}
            \partial_t \hat{v}_3(t, x(\cdot))
            = P_3 (t, x(\cdot), \hat{\tau}, \hat{y}(\cdot)),
            \quad \nabla_{x(\cdot)} \hat{v}_3(t, x(\cdot))
            = Q_3 (t, x(\cdot), \hat{\tau}, \hat{y}(\cdot)).
        \end{equation}
    \end{proposition}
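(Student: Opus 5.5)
The plan is to reduce both cases to one computation. Along a curve $z(\cdot) \in \Lip(t, x(\cdot))$, for all sufficiently small $\delta > 0$, the functional $V_2$ should have the form
\begin{equation*}
    V_2(t + \delta, z(\cdot), \hat{\tau}, \hat{y}(\cdot))
    = a(\delta) \vee c,
    \qquad a(\delta) \coloneq \hat{v}_1(t + \delta, z(\cdot)),
\end{equation*}
with a constant $c \geq 0$ that does not depend on $\delta$. Then $\hat{v}_3(t + \delta, z(\cdot)) = g_c(a(\delta))$, where $g_c(a) \coloneq (a \vee c - a)^2 / (a \vee c)$, and $g_c(0) \coloneq 0$ in the degenerate case $a = c = 0$. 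The result should then follow from a one-dimensional chain rule combined with the $ci$-differentiability of $\hat{v}_1$ (see \eqref{V_1_derivatives}).

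The function $g_c$ is easy to analyse. For $c > 0$, $g_c(a) = (c - a)^2 / c$ on $[0, c]$ and $g_c(a) = 0$ for $a \geq c$. So $g_c$ is $C^1$ on $[0, \infty)$ with $g_c'(a) = -2 (a \vee c - a)/(a \vee c)$, and this derivative is continuous at $a = c$. For $c = 0$ we have $g_0 \equiv 0$. The chain rule then gives
\begin{equation*}
    \hat{v}_3(t + \delta, z(\cdot)) - \hat{v}_3(t, x(\cdot))
    = g_c'(a(0)) \bigl( P_1 \delta + \langle Q_1, z(t + \delta) - x(t) \rangle \bigr) + o(\delta),
\end{equation*}
using $a(\delta) - a(0) = O(\delta)$, which holds because $z$ is Lipschitz on $[t, T]$. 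Here $P_1$ and $Q_1$ are evaluated at $(t, x(\cdot), \hat{\tau}, \hat{y}(\cdot))$. Since $-2(V_2 - V_1)/V_2$ evaluated at the base point equals $g_c'(a(0))$, this is exactly \eqref{hat_v_3_derivatives} by \eqref{V_3_1}. The degenerate situation where the base point satisfies $(t, x(\cdot \wedge t)) = (\hat{\tau}, \hat{y}(\cdot \wedge \hat{\tau}))$ forces $c = 0$ and $a(0) = 0$. In that situation both sides vanish, consistently with \eqref{V_3_2}.

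In case \eqref{hat_V_3_case_2}, the identity above is immediate. Since $t \geq \hat{\tau}$, we have $(t + \delta) \wedge \hat{\tau} = \hat{\tau} = t \wedge \hat{\tau}$. Also $z(\cdot \wedge t) = x(\cdot \wedge t)$. Hence the second term in \eqref{V_2} equals the constant $c \coloneq \|x(\cdot \wedge \hat{\tau}) - \hat{y}(\cdot \wedge \hat{\tau})\|_\infty^2$ for all $\delta$.

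Case \eqref{hat_V_3_case_1} is the main obstacle. For $\delta < \hat{\tau} - t$, the second term in \eqref{V_2} equals $m_0 \vee \max_{\xi \in [t, t + \delta]} \|z(\xi) - \hat{y}(\xi)\|^2$, where $m_0 \coloneq \|x(\cdot \wedge t) - \hat{y}(\cdot \wedge t)\|_\infty^2$. This may genuinely grow with $\delta$, and I must show that the growth is invisible. The key estimate uses the Lipschitz bound in \eqref{hat_V_3_case_1} and the strict inequality $\hat{\tau} > t$, and it is where the "$+1$" in $2L^2 + 1$ is needed:
\begin{equation*}
    \|x(t) - \hat{y}(t)\|^2
    \leq \bigl( \|x(t) - \hat{y}(\hat{\tau})\| + L (\hat{\tau} - t) \bigr)^2
    \leq 2 \|x(t) - \hat{y}(\hat{\tau})\|^2 + 2 L^2 (\hat{\tau} - t)^2
    < a(0).
\end{equation*}
By continuity of $z$, $\hat{y}$ and $a$, for all small $\delta$ we then get $\max_{\xi \in [t, t + \delta]} \|z(\xi) - \hat{y}(\xi)\|^2 < a(\delta)$. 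This term can therefore be dropped from the maximum $a(\delta) \vee m_0 \vee (\cdot)$, which leaves $V_2 = a(\delta) \vee m_0$. So the identity holds with $c = m_0$, and the chain-rule argument above applies verbatim. The base point is non-degenerate here since $t \neq \hat{\tau}$.
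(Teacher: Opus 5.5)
Your proof is correct, but it is organized differently from the paper's. The paper fixes $c=\|x(\cdot\wedge t\wedge\hat\tau)-\hat y(\cdot\wedge t\wedge\hat\tau)\|_\infty^2$ and splits according to whether $\hat v_1(t,x(\cdot))<c$ or $\hat v_1(t,x(\cdot))\ge c$.

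In the first case it shows, much as you do, that $\hat v_2(t+\delta,z(\cdot))=c$ for small $\delta$. However, it controls the newly added part of the running maximum by comparing it with $c$, via $\|x(t)-\hat y(t)\|^2\le\hat v_1(t,x(\cdot))<c$.

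In the second case it uses only $\hat v_3(t,x(\cdot))=0$ and $P_3=Q_3=0$. It then proves the quantitative bound $\hat v_3(t+\delta,z(\cdot))\le M^2\delta^2$ by explicitly estimating the growth of the running maximum and the decrease of $\hat v_1$ along $z(\cdot)$. The constants $M$ are built from $L$ and $L_z$, and the conditions \eqref{hat_V_3_case_1} and \eqref{hat_V_3_case_2} are handled separately.

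Your key observation is that, under \eqref{hat_V_3_case_1}, the inequality $\|x(t)-\hat y(t)\|^2<\hat v_1(t,x(\cdot))$ is strict regardless of the case. This comes from the slack $(\hat\tau-t)^2>0$ supplied by the coefficient $2L^2+1$. You compare the new part of the running maximum with $\hat v_1(t+\delta,z(\cdot))$ instead of with $c$. That makes the new part invisible in $\hat v_2$ in all situations, so $\hat v_2(t+\delta,z(\cdot))=\hat v_1(t+\delta,z(\cdot))\vee c$ with $c$ frozen. Everything then reduces to the scalar function $g_c$. Its differentiability at the kink $a=c$, where both one-sided derivatives vanish, absorbs the paper's second case, including the borderline $\hat v_1(t,x(\cdot))=c$, so you avoid the $M$-estimates entirely.

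What the paper's route buys is an explicit remainder bound on an explicit interval $(0,\delta_4]$. That bound is not needed for the pointwise $ci$-differentiability claimed here. Your argument is shorter and makes it transparent that the formulas for $P_3$ and $Q_3$ are just the chain rule applied to $\hat v_1$.
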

    \begin{proof}
        Consider the functional $\hat{v}_1$ from \eqref{hat_v_1} and denote
        \begin{equation*}
            \hat{v}_2(t, x(\cdot))
            \coloneq V_2(t, x(\cdot), \hat{\tau}, \hat{y}(\cdot))
            \quad \forall (t, x(\cdot)) \in [0, T] \times C.
        \end{equation*}

        Fix a point $(t, x(\cdot)) \in [0, T) \times C$ such that either condition \eqref{hat_V_3_case_1} or condition \eqref{hat_V_3_case_2} is satisfied.
        Take a function $z(\cdot) \in \Lip(t, x(\cdot))$ and let $L_z > 0$ be such that
        \begin{equation} \label{proof_L_z}
            \|z(\xi_1) - z(\xi_2)\|
            \leq L_z |\xi_1 - \xi_2|
            \quad \forall \xi_1, \xi_2 \in [t, T].
        \end{equation}

        \smallskip

        \textit{Case 1.}
        Suppose that
        \begin{equation} \label{proof_case_1}
            \hat{v}_1(t, x(\cdot))
            < c,
            \quad c
            \coloneq \|x(\cdot \wedge t \wedge \hat{\tau}) - \hat{y}(\cdot \wedge t \wedge \hat{\tau})\|_\infty^2.
        \end{equation}

        Let us show that there exists $\delta_1 \in (0, T - t]$ such that
        \begin{equation} \label{proof_case_1_eq_1}
            \|z(\cdot \wedge (t + \delta) \wedge \hat{\tau}) - \hat{y}(\cdot \wedge (t + \delta) \wedge \hat{\tau})\|_\infty^2
            = c
            \quad \forall \delta \in (0, \delta_1].
        \end{equation}
        In the case of condition \eqref{hat_V_3_case_1}, we have
        \begin{equation*}
            \|x(t) - \hat{y}(t)\|
            \leq \|x(t) - \hat{y}(\hat{\tau})\| + \|\hat{y}(\hat{\tau}) - \hat{y}(t)\|
            \leq \|x(t) - \hat{y}(\hat{\tau})\| + L (\hat{\tau} - t),
        \end{equation*}
        which, according to \eqref{proof_case_1}, implies that
        \begin{equation*}
            \|x(t) - \hat{y}(t)\|^2
            \leq 2 \|x(t) - \hat{y}(\hat{\tau})\|^2  + 2 L^2 (\hat{\tau} - t)^2
            \leq \hat{v}_1(t, x(\cdot))
            < c.
        \end{equation*}
        Therefore, we can choose $\delta_1 \in (0, T - t]$ such that $t + \delta_1 \leq \hat{\tau}$ and
        \begin{equation*}
            \|z(\xi) - \hat{y}(\xi)\|^2
            < c
            \quad \forall \xi \in [t, t + \delta_1].
        \end{equation*}
        Then, for every $\delta \in (0, \delta_1]$, recalling the definition of $c$ (see \eqref{proof_case_1}), we derive
        \begin{align*}
            & \|z(\cdot \wedge (t + \delta) \wedge \hat{\tau}) - \hat{y}(\cdot \wedge (t + \delta) \wedge \hat{\tau})\|_\infty^2 \\
            & \quad = \|x(\cdot \wedge t \wedge \hat{\tau}) - \hat{y}(\cdot \wedge t \wedge \hat{\tau})\|_\infty^2
            \vee \max_{\xi \in [t, t + \delta]} \|z(\xi) - \hat{y}(\xi)\|^2 \\
            & \quad = c.
        \end{align*}
        In the case of condition \eqref{hat_V_3_case_2}, equality \eqref{proof_case_1_eq_1} is satisfied for every $\delta \in (0, T - t]$.
        Indeed, observing that $(t + \delta) \wedge \hat{\tau} = \hat{\tau} = t \wedge \hat{\tau}$, we obtain
        \begin{align*}
            \|z(\cdot \wedge (t + \delta) \wedge \hat{\tau}) - \hat{y}(\cdot \wedge (t + \delta) \wedge \hat{\tau})\|_\infty^2
            & = \|z(\cdot \wedge t \wedge \hat{\tau}) - \hat{y}(\cdot \wedge t \wedge \hat{\tau})\|_\infty^2 \\
            & = \|x(\cdot \wedge t \wedge \hat{\tau}) - \hat{y}(\cdot \wedge t \wedge \hat{\tau})\|_\infty^2 \\
            & = c.
        \end{align*}
        So, we can simply take $\delta_1 \coloneq T - t$.

        Since the functional $\hat{v}_1$ is continuous and non-anticipative (see \eqref{V_1_non-anticipative}), we have
        \begin{equation} \label{v_1_convergence}
            \lim_{\delta \to 0^+} \hat{v}_1(t + \delta, z(\cdot))
            = \hat{v}_1(t, z(\cdot))
            = \hat{v}_1(t, z(\cdot \wedge t))
            = \hat{v}_1(t, x(\cdot \wedge t))
            = \hat{v}_1(t, x(\cdot)).
        \end{equation}
        Therefore, and owing to \eqref{proof_case_1}, there exists $\delta_2 \in (0, T - t]$ such that
        \begin{equation} \label{proof_case_1_ineq_2}
            \hat{v}_1(t + \delta, z(\cdot))
            < c
            \quad \forall \delta \in (0, \delta_2].
        \end{equation}

        Put $\delta_3 \coloneq \min\{\delta_1, \delta_2\}$.
        Then, due to \eqref{V_2}, \eqref{proof_case_1}, \eqref{proof_case_1_eq_1}, and \eqref{proof_case_1_ineq_2}, we get
        \begin{equation} \label{proof_v_2_c}
            \hat{v}_2(t, x(\cdot))
            = c,
            \quad \hat{v}_2(t + \delta, z(\cdot))
            = c
            \quad \forall \delta \in (0, \delta_3].
        \end{equation}

        Note that $c > 0$ by \eqref{proof_case_1} and since the functional $\hat{v}_1$ is non-negative.
        Consequently, $(t, x(\cdot \wedge t)) \neq (\hat{\tau}, \hat{y}(\cdot \wedge \hat{\tau}))$ and $(t + \delta, z(\cdot \wedge (t + \delta))) \neq (\hat{\tau}, \hat{y}(\cdot \wedge \hat{\tau}))$ for all $\delta \in (0, \delta_3]$ thanks to Proposition \ref{proposition_V_2}, (i).
        Hence (see \eqref{V_3_1}), for every $\delta \in (0, \delta_3]$,
        \begin{equation*}
            \hat{v}_3(t, x(\cdot))
            = \frac{\bigl( c - \hat{v}_1(t, x(\cdot)) \bigr)^2}{c},
            \quad \hat{v}_3(t + \delta, z(\cdot))
            = \frac{ \bigl( c - \hat{v}_1(t + \delta, z(\cdot)) \bigr)^2}{c},
        \end{equation*}
        which yields
        \begin{equation} \label{proof_v_3_difference}
            \begin{aligned}
                & \hat{v}_3(t + \delta, z(\cdot)) - \hat{v}_3(t, x(\cdot)) \\
                & \quad = \frac{1}{c}
                \bigl( \hat{v}_1(t + \delta, z(\cdot)) + \hat{v}_1(t, x(\cdot)) - 2 c \bigr)
                \bigl( \hat{v}_1(t + \delta, z(\cdot)) - \hat{v}_1(t, x(\cdot)) \bigr).
            \end{aligned}
        \end{equation}

        Recall that $\hat{v}_1$ is $ci$-differentiable at the point $(t, x(\cdot))$ and the $ci$-derivatives are given by \eqref{V_1_derivatives}.
        This means that
        \begin{align*}
            \lim_{\delta \to 0^+} \frac{1}{\delta} \bigl( & \hat{v}_1(t + \delta, z(\cdot)) - \hat{v}_1(t, x(\cdot)) \\
            & \quad - P_1(t, x(\cdot), \hat{\tau}, \hat{y}(\cdot)) \delta
            - \langle Q_1(t, x(\cdot), \hat{\tau}, \hat{y}(\cdot)), z(t + \delta) - x(t) \rangle \bigr)
            = 0.
        \end{align*}
        Then, based on \eqref{proof_v_3_difference} and using \eqref{proof_L_z} and \eqref{v_1_convergence}, we derive
        \begin{align*}
            & \lim_{\delta \to 0^+} \frac{1}{\delta} \biggl( \hat{v}_3(t + \delta, z(\cdot)) - \hat{v}_3(t, x(\cdot)) \\
            & \quad - 2 \frac{\hat{v}_1(t, x(\cdot)) - c}{c}
            \bigl( P_1(t, x(\cdot), \hat{\tau}, \hat{y}(\cdot)) \delta
            + \langle Q_1(t, x(\cdot), \hat{\tau}, \hat{y}(\cdot)), z(t + \delta) - x(t) \rangle \bigr) \biggr)
            = 0.
        \end{align*}
        Therefore, $\hat{v}_3$ is $ci$-differentiable at the point $(t, x(\cdot))$ and, in view of \eqref{V_3_1} and the first equality in \eqref{proof_v_2_c}, equalities \eqref{hat_v_3_derivatives} are valid.

        \smallskip

        \textit{Case 2.}
        Now, suppose that
        \begin{equation} \label{proof_case_2}
            \hat{v}_1(t, x(\cdot))
            \geq c,
        \end{equation}
        where $c$ is taken from \eqref{proof_case_1}.
        According to \eqref{V_2}, we have $\hat{v}_2(t, x(\cdot)) = \hat{v}_1(t, x(\cdot))$.
        Then, by \eqref{V_3_1} and \eqref{V_3_2},
        \begin{equation*}
            \hat{v}_3(t, x(\cdot))
            = 0,
            \quad (P_3, Q_3)(t, x(\cdot), \hat{\tau}, \hat{y}(\cdot))
            = (0, 0).
        \end{equation*}
        Thus, in order to complete the proof, it suffices to verify that
        \begin{equation} \label{proof_case_2_convergence}
            \lim_{\delta \to 0^+} \frac{\hat{v}_3(t + \delta, z(\cdot))}{\delta}
            = 0.
        \end{equation}

        Let us show that there exist $M > 0$ and $\delta_4 \in (0, T - t]$ such that, for every $\delta \in (0, \delta_4]$ with $\hat{v}_3(t + \delta, z(\cdot)) \neq 0$,
        \begin{equation} \label{proof_case_2_statement}
            \begin{aligned}
                & \|z(\cdot \wedge (t + \delta) \wedge \hat{\tau}) - \hat{y}(\cdot \wedge (t + \delta) \wedge \hat{\tau})\|_\infty^2
                - \hat{v}_1(t + \delta, z(\cdot)) \\
                & \quad \leq M \delta \|z(\cdot \wedge (t + \delta) \wedge \hat{\tau}) - \hat{y}(\cdot \wedge (t + \delta) \wedge \hat{\tau})\|_\infty.
            \end{aligned}
        \end{equation}

        First of all, note that, thanks to \eqref{proof_case_2} and the definition of $c$ (see \eqref{proof_case_1}),
        \begin{equation} \label{proof_case_2_basic_0}
            \begin{aligned}
                & \|z(\cdot \wedge (t + \delta) \wedge \hat{\tau}) - \hat{y}(\cdot \wedge (t + \delta) \wedge \hat{\tau})\|_\infty^2
                - \hat{v}_1(t + \delta, z(\cdot)) \\
                & \quad \leq \|z(\cdot \wedge (t + \delta) \wedge \hat{\tau}) - \hat{y}(\cdot \wedge (t + \delta) \wedge \hat{\tau})\|_\infty^2
                - \|x(\cdot \wedge t \wedge \hat{\tau}) - \hat{y}(\cdot \wedge t \wedge \hat{\tau})\|_\infty^2 \\
                & \qquad + \hat{v}_1(t, x(\cdot)) - \hat{v}_1(t + \delta, z(\cdot))
            \end{aligned}
        \end{equation}
        for all $\delta \in (0, T - t]$.
        In addition, for every $\delta \in (0, T - t]$ with $\hat{v}_3(t + \delta, z(\cdot)) \neq 0$, in view of \eqref{V_2} and \eqref{V_3_1}, we get
        \begin{equation} \label{proof_case_2_basic}
            \hat{v}_2(t + \delta, z(\cdot))
            = \|z(\cdot \wedge (t + \delta) \wedge \hat{\tau}) - \hat{y}(\cdot \wedge (t + \delta) \wedge \hat{\tau})\|_\infty^2
            > \hat{v}_1(t + \delta, z(\cdot)).
        \end{equation}

        Suppose that condition \eqref{hat_V_3_case_1} holds.
        Then, put $M \coloneq 6 L^2 + 2 L + 3 + 2 L_z^2 + 6 L_z$, take $\delta_4 \in (0, T - t]$ such that $t + 2 \delta_4 \leq \hat{\tau}$, and fix $\delta \in (0, \delta_4]$ with $\hat{v}_3(t + \delta, z(\cdot)) \neq 0$.
        Based on the equality $(t + \delta) \wedge \hat{\tau} = t + \delta$, we derive
        \begin{align*}
            & \|z(\cdot \wedge (t + \delta) \wedge \hat{\tau}) - \hat{y}(\cdot \wedge (t + \delta) \wedge \hat{\tau})\|_\infty^2
            - \|x(\cdot \wedge t \wedge \hat{\tau}) - \hat{y}(\cdot \wedge t \wedge \hat{\tau})\|_\infty^2 \\
            & \quad = \max_{\xi \in [- h, t + \delta]} \|z(\xi) - \hat{y}(\xi)\|^2
            - \max_{\xi \in [- h, t]} \|x(\xi) - \hat{y}(\xi)\|^2 \\
            & \quad \leq \max_{\xi \in [t, t + \delta]} \|z(\xi) - \hat{y}(\xi)\|^2
            - \|x(t) - \hat{y}(t)\|^2.
        \end{align*}
        For every $\xi \in [t, t + \delta]$, due to \eqref{hat_V_3_case_1} and \eqref{proof_L_z}, we obtain
        \begin{align*}
            & \|z(\xi) - \hat{y}(\xi)\|^2 - \|x(t) - \hat{y}(t)\|^2 \\
            & \quad = \bigl( \|z(\xi) - \hat{y}(\xi)\| - \|x(t) - \hat{y}(t)\| \bigr)
            (\|z(\xi) - \hat{y}(\xi)\| + \|x(t) - \hat{y}(t)\|) \\
            & \quad \leq 2 \|z(\xi) - \hat{y}(\xi) - x(t) + \hat{y}(t)\|
            \max_{\eta \in [- h, t + \delta]} \|z(\eta) - \hat{y}(\eta)\| \\
            & \quad \leq 2 (L_z + L) \delta
            \|z(\cdot \wedge (t + \delta) \wedge \hat{\tau}) - \hat{y}(\cdot \wedge (t + \delta) \wedge \hat{\tau})\|_\infty.
        \end{align*}
        Hence,
        \begin{equation} \label{proof_case_2_subcase_1_1}
            \begin{aligned}
                & \|z(\cdot \wedge (t + \delta) \wedge \hat{\tau}) - \hat{y}(\cdot \wedge (t + \delta) \wedge \hat{\tau})\|_\infty^2
                - \|x(\cdot \wedge t \wedge \hat{\tau}) - \hat{y}(\cdot \wedge t \wedge \hat{\tau})\|_\infty^2 \\
                & \quad \leq 2 (L_z + L) \delta
                \|z(\cdot \wedge (t + \delta) \wedge \hat{\tau}) - \hat{y}(\cdot \wedge (t + \delta) \wedge \hat{\tau})\|_\infty.
            \end{aligned}
        \end{equation}

        Further, since $\delta \leq \hat{\tau} - t - \delta$, we have
        \begin{equation*}
            (t - \hat{\tau})^2 - (t + \delta - \hat{\tau})^2
            = \delta (2 \hat{\tau} - 2 t - \delta)
            = 2 \delta (\hat{\tau} - t - \delta) + \delta^2
            \leq 3 \delta (\hat{\tau} - t - \delta)
        \end{equation*}
        and
        \begin{equation} \label{proof_case_2_subcase_1_1.5}
            \begin{aligned}
                & \|x(t) - \hat{y}(\hat{\tau})\|^2 - \|z(t + \delta) - \hat{y}(\hat{\tau})\|^2 \\
                & \quad = \bigl( \|x(t) - \hat{y}(\hat{\tau})\| - \|z(t + \delta) - \hat{y}(\hat{\tau})\| \bigr)
                \bigl( \|x(t) - \hat{y}(\hat{\tau})\| + \|z(t + \delta) - \hat{y}(\hat{\tau})\| \bigr) \\
                & \quad \leq \|x(t) - z(t + \delta)\|
                \bigl( \|x(t) - z(t + \delta)\|
                + 2 \|z(t + \delta) - \hat{y}(\hat{\tau})\| \bigr) \\
                & \quad \leq L_z^2 \delta^2 + 2 L_z \delta \|z(t + \delta) - \hat{y}(\hat{\tau})\| \\
                & \quad \leq L_z^2 \delta (\hat{\tau} - t - \delta) + 2 L_z \delta \|z(t + \delta) - \hat{y}(\hat{\tau})\|.
            \end{aligned}
        \end{equation}
        Therefore, noting that (see \eqref{V_1})
        \begin{equation*}
            \hat{\tau} - t - \delta
            \leq \sqrt{\hat{v}_1(t + \delta, z(\cdot))},
            \quad \|z(t + \delta) - \hat{y}(\hat{\tau})\|
            \leq \sqrt{\hat{v}_1(t + \delta, z(\cdot))}
        \end{equation*}
        and using \eqref{proof_case_2_basic}, we get
        \begin{equation} \label{proof_case_2_subcase_1_2}
            \begin{aligned}
                & \hat{v}_1(t, x(\cdot)) - \hat{v}_1(t + \delta, z(\cdot)) \\
                & \quad \leq (6 L^2 + 3 + 2 L_z^2) \delta (\hat{\tau} - t - \delta)
                + 4 L_z \delta \|z(t + \delta) - \hat{y}(\hat{\tau})\| \\
                & \quad \leq (6 L^2 + 3 + 2 L_z^2 + 4 L_z) \delta
                \|z(\cdot \wedge (t + \delta) \wedge \hat{\tau}) - \hat{y}(\cdot \wedge (t + \delta) \wedge \hat{\tau})\|_\infty.
            \end{aligned}
        \end{equation}
        From \eqref{proof_case_2_basic_0}, \eqref{proof_case_2_subcase_1_1}, and \eqref{proof_case_2_subcase_1_2}, we derive \eqref{proof_case_2_statement}.

        Now, suppose that condition \eqref{hat_V_3_case_2} holds.
        Put $M \coloneq 2 L_z^2 + 4 L_z$ and $\delta_4 \coloneq T - t$ and fix $\delta \in (0, \delta_4]$ with $\hat{v}_3(t + \delta, z(\cdot)) \neq 0$.
        Since $(t + \delta) \wedge \hat{\tau} = \hat{\tau} = t \wedge \hat{\tau}$, we have
        \begin{equation*}
            \|z(\cdot \wedge (t + \delta) \wedge \hat{\tau}) - \hat{y}(\cdot \wedge (t + \delta) \wedge \hat{\tau})\|_\infty^2
            = \|x(\cdot \wedge t \wedge \hat{\tau}) - \hat{y}(\cdot \wedge t \wedge \hat{\tau})\|_\infty^2,
        \end{equation*}
        and, therefore, according to \eqref{proof_case_2_basic_0},
        \begin{equation*}
            \|z(\cdot \wedge (t + \delta) \wedge \hat{\tau})
            - \hat{y}(\cdot \wedge (t + \delta) \wedge \hat{\tau})\|_\infty^2
            - \hat{v}_1(t + \delta, z(\cdot))
            \leq \hat{v}_1(t, x(\cdot)) - \hat{v}_1(t + \delta, z(\cdot)).
        \end{equation*}
        In view of the inequality $(t - \hat{\tau})^2 \leq (t + \delta - \hat{\tau})^2$, we obtain (see \eqref{V_1})
        \begin{equation*}
            \hat{v}_1(t, x(\cdot)) - \hat{v}_1(t + \delta, z(\cdot))
            \leq 2 \bigl( \|x(t) - \hat{y}(\hat{\tau})\|^2 - \|z(t + \delta) - \hat{y}(\hat{\tau})\|^2 \bigr),
        \end{equation*}
        and, similarly to \eqref{proof_case_2_subcase_1_1.5}, we get
        \begin{align*}
            \|x(t) - \hat{y}(\hat{\tau})\|^2 - \|z(t + \delta) - \hat{y}(\hat{\tau})\|^2
            & \leq L_z^2 \delta^2
            + 2 L_z \delta \|z(t + \delta) - \hat{y}(\hat{\tau})\| \\
            & \leq L_z^2 \delta (t + \delta - \hat{\tau})
            + 2 L_z \delta \|z(t + \delta) - \hat{y}(\hat{\tau})\|.
        \end{align*}
        Hence, noting that
        \begin{equation*}
            t + \delta - \hat{\tau}
            \leq \sqrt{\hat{v}_1(t + \delta, z(\cdot))},
            \quad \|z(t + \delta) - \hat{y}(\hat{\tau})\|
            \leq \sqrt{\hat{v}_1(t + \delta, z(\cdot))}
        \end{equation*}
        and using \eqref{proof_case_2_basic}, we derive
        \begin{align*}
            & \|z(\cdot \wedge (t + \delta) \wedge \hat{\tau})
            - \hat{y}(\cdot \wedge (t + \delta) \wedge \hat{\tau})\|_\infty^2
            - \hat{v}_1(t + \delta, z(\cdot)) \\
            & \quad \leq (2 L_z^2 + 4 L_z) \delta
            \sqrt{\hat{v}_1(t + \delta, z(\cdot))} \\
            & \quad \leq (2 L_z^2 + 4 L_z) \delta
            \|z(\cdot \wedge (t + \delta) \wedge \hat{\tau}) - \hat{y}(\cdot \wedge (t + \delta) \wedge \hat{\tau})\|_\infty,
        \end{align*}
        which concludes the proof of \eqref{proof_case_2_statement}.

        For every $\delta \in (0, \delta_4]$, the inequality below is valid:
        \begin{equation} \label{proof_case_2_ineq_v_3}
            \hat{v}_3(t + \delta, z(\cdot))
            \leq M^2 \delta^2.
        \end{equation}
        Indeed, if $\hat{v}_3(t + \delta, z(\cdot)) = 0$, inequality \eqref{proof_case_2_ineq_v_3} holds automatically.
        Let $\hat{v}_3(t + \delta, z(\cdot)) \neq 0$.
        Then, thanks to \eqref{proof_case_2_statement} and \eqref{proof_case_2_basic}, we have
        \begin{equation*}
            \hat{v}_3(t + \delta, z(\cdot))
            = \frac{ \bigl( \|z(\cdot \wedge (t + \delta) \wedge \hat{\tau}) - \hat{y}(\cdot \wedge (t + \delta) \wedge \hat{\tau})\|_\infty^2
            - \hat{v}_1(t + \delta, z(\cdot)) \bigr)^2}
            {\|z(\cdot \wedge (t + \delta) \wedge \hat{\tau}) - \hat{y}(\cdot \wedge (t + \delta) \wedge \hat{\tau})\|_\infty^2}
            \leq M^2 \delta^2.
        \end{equation*}

        From \eqref{proof_case_2_ineq_v_3} and since $\hat{v}_3$ is non-negative, we derive \eqref{proof_case_2_convergence}.
        The proof is complete.
    \end{proof}

    \smallskip

    \textit{Step 4.}
    Finally, let mappings $(V^L, P^L, Q^L) \colon [0, T] \times C \times [0, T] \times C \to \mathbb{R} \times \mathbb{R} \times \mathbb{R}^n$ be defined for all $(t, x(\cdot), \tau, y(\cdot)) \in [0, T] \times C \times [0, T] \times C$ by
    \begin{equation} \label{V^L}
        \begin{aligned}
            & (V^L, P^L, Q^L)(t, x(\cdot), \tau, y(\cdot)) \\
            & \quad \coloneq (V_3, P_3, Q_3)(t, x(\cdot), \tau, y(\cdot)) + 2 (V_1, P_1, Q_1)(t, x(\cdot), \tau, y(\cdot)).
        \end{aligned}
    \end{equation}
    In a more direct way (see \eqref{V_1}, \eqref{V_3_1}, and \eqref{V_3_2}), if $(t, x(\cdot \wedge t)) \neq (\tau, y(\cdot \wedge \tau))$,
    \begin{equation} \label{V^L_1}
        \begin{aligned}
            V^L(t, x(\cdot), \tau, y(\cdot))
            & = V_2(t, x(\cdot), \tau, y(\cdot))
            + \frac{V_1(t, x(\cdot), \tau, y(\cdot))^2}
            {V_2(t, x(\cdot), \tau, y(\cdot))}, \\
            P^L(t, x(\cdot), \tau, y(\cdot))
            & = 2 \frac{V_1(t, x(\cdot), \tau, y(\cdot))}
            {V_2(t, x(\cdot), \tau, y(\cdot))}
            P_1(t, x(\cdot), \tau, y(\cdot)) \\
            & = 4 (2 L^2 + 1)  \frac{V_1(t, x(\cdot), \tau, y(\cdot))}
            {V_2(t, x(\cdot), \tau, y(\cdot))}
            (t - \tau), \\
            Q^L(t, x(\cdot), \tau, y(\cdot))
            & = 2 \frac{V_1(t, x(\cdot), \tau, y(\cdot))}
            {V_2(t, x(\cdot), \tau, y(\cdot))}
            Q_1(t, x(\cdot), \tau, y(\cdot)) \\
            & = 8 \frac{V_1(t, x(\cdot), \tau, y(\cdot))}
            {V_2(t, x(\cdot), \tau, y(\cdot))} (x(t) - y(\tau))
        \end{aligned}
    \end{equation}
    and if $(t, x(\cdot \wedge t)) = (\tau, y(\cdot \wedge \tau))$,
    \begin{equation} \label{V^L_2}
        (V^L, P^L, Q^L)(t, x(\cdot), \tau, y(\cdot))
        = (0, 0, 0).
    \end{equation}

    Summarizing the results above in this section, we obtain the theorem below.
    \begin{theorem} \label{theorem_V^L}
        The following statements hold.
        \begin{itemize}
        \item[\rm (i)]
            The functional $V^L$ is non-negative, the mappings $(V^L, P^L, Q^L)$ are continuous, and the equalities
            \begin{equation} \label{V_L_symmetry}
                \begin{aligned}
                    V^L(t, x(\cdot), \tau, y(\cdot))
                    & = V^L(\tau, y(\cdot), t, x(\cdot)), \\
                    (P^L, Q^L)(t, x(\cdot), \tau, y(\cdot))
                    & = - (P^L, Q^L)(\tau, y(\cdot), t, x(\cdot))
                \end{aligned}
            \end{equation}
            and
            \begin{equation} \label{V_L_non-anticipative}
                (V^L, P^L, Q^L)(t, x(\cdot), \tau, y(\cdot))
                = (V^L, P^L, Q^L)(t, x(\cdot \wedge t), \tau, y(\cdot \wedge \tau))
            \end{equation}
            hold for all $(t, x(\cdot), \tau, y(\cdot)) \in [0, T] \times C \times [0, T] \times C$.

        \item[\rm (ii)]
            For every $(t, x(\cdot), \tau, y(\cdot)) \in [0, T] \times C \times [0, T] \times C$, the equality $V^L(t, x(\cdot), \tau, y(\cdot)) = 0$ takes place if and only if $(t, x(\cdot \wedge t)) = (\tau, y(\cdot \wedge \tau))$.

        \item[\rm (iii)]
            For every $(t, x(\cdot), \tau, y(\cdot)) \in [0, T] \times C \times [0, T] \times C$ satisfying
            \begin{equation} \label{V_L_lower_bound_condition_1}
                \tau
                \geq t,
                \quad \|y(\tau) - y(\xi)\|
                \leq L (\tau - t)
                \quad \forall \xi \in [t, \tau]
            \end{equation}
            or
            \begin{equation} \label{V_L_lower_bound_condition_2}
                t
                \geq \tau,
                \quad \|x(t) - x(\xi)\|
                \leq L (t - \tau)
                \quad \forall \xi \in [\tau, t],
            \end{equation}
            the inequalities below are valid:
            \begin{equation} \label{V_L_lower_bound}
                \begin{aligned}
                    V^L(t, x(\cdot), \tau, y(\cdot))
                    \geq \|x(\cdot \wedge t) - y(\cdot \wedge \tau)\|_\infty^2,
                    \quad V^L(t, x(\cdot), \tau, y(\cdot))
                    \geq (t - \tau)^2.
                \end{aligned}
            \end{equation}

        \item[\rm (iv)]
            For every $(t, x(\cdot), \tau, y(\cdot)) \in [0, T] \times C \times [0, T] \times C$, the inequalities below hold:
            \begin{equation} \label{P_L_Q_L_upper_bound}
                \begin{aligned}
                    |P^L(t, x(\cdot), \tau, y(\cdot))|
                    & \leq 4 (2 L^2 + 1) |t - \tau|, \\
                    \|Q^L(t, x(\cdot), \tau, y(\cdot))\|
                    & \leq 8 \|x(t) - y(\tau)\|.
                \end{aligned}
            \end{equation}

        \item[\rm (v)]
            Fix $(\hat{\tau}, \hat{y}(\cdot)) \in [0, T] \times C$ and consider the functional
            \begin{equation*}
                \hat{v}^L(t, x(\cdot))
                \coloneq V^L(t, x(\cdot), \hat{\tau}, \hat{y}(\cdot))
                \quad \forall (t, x(\cdot)) \in [0, T] \times C.
            \end{equation*}
            Let a point $(t, x(\cdot)) \in [0, T) \times C$ be such that $t < \hat{\tau}$ and $\|\hat{y}(\xi) - \hat{y}(t)\| \leq L (\xi - t)$ for all $\xi \in [t, \hat{\tau}]$ or $t \geq \hat{\tau}$.
            Then, $\hat{v}^L$ is $ci$-differentiable at the point $(t, x(\cdot))$ and
            \begin{equation*}
                \partial_t \hat{v}^L(t, x(\cdot))
                = P^L (t, x(\cdot), \hat{\tau}, \hat{y}(\cdot)),
                \quad \nabla_{x(\cdot)} \hat{v}^L(t, x(\cdot))
                = Q^L (t, x(\cdot), \hat{\tau}, \hat{y}(\cdot)).
            \end{equation*}

        \item[\rm (vi)]
            Fix $(\bar{t}, \bar{x}(\cdot)) \in [0, T] \times C$ and consider the functional
            \begin{equation*}
                \bar{v}^L(\tau, y(\cdot))
                \coloneq V^L(\bar{t}, \bar{x}(\cdot), \tau, y(\cdot))
                \quad \forall (\tau, y(\cdot)) \in [0, T] \times C.
            \end{equation*}
            Let a point $(\tau, y(\cdot)) \in [0, T) \times C$ be such that $\tau < \bar{t}$ and $\|\bar{x}(\xi) - \bar{x}(\tau)\| \leq L (\xi - \tau)$ for all $\xi \in [\tau, \bar{t}]$ or $\tau \geq \bar{t}$.
            Then, $\bar{v}^L$ is $ci$-differentiable at the point $(\tau, y(\cdot))$ and
            \begin{equation*}
                \partial_\tau \bar{v}^L(\tau, y(\cdot))
                = - P^L (\bar{t}, \bar{x}(\cdot), \tau, y(\cdot)),
                \quad \nabla_{y(\cdot)} \bar{v}^L(\tau, y(\cdot))
                = - Q^L (\bar{t}, \bar{x}(\cdot), \tau, y(\cdot)).
            \end{equation*}
        \end{itemize}
    \end{theorem}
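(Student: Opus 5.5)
The plan is to obtain each item of Theorem \ref{theorem_V^L} from the corresponding properties of $V_1$, $V_2$, $V_3$ established in Steps 1--3, using the definition \eqref{V^L} and its explicit form \eqref{V^L_1}, \eqref{V^L_2}.

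For (i), non-negativity holds because $V^L = V_3 + 2 V_1$ and both summands are non-negative. Continuity of $(V^L, P^L, Q^L)$ follows from continuity of $(V_1, P_1, Q_1)$ and from Proposition \ref{proposition_V_3_continuity}. The symmetry relations \eqref{V_L_symmetry} and the non-anticipativity \eqref{V_L_non-anticipative} are inherited termwise from \eqref{V_1_symmetry}, \eqref{V_1_non-anticipative}, \eqref{V_3_symmetry} and \eqref{V_3_non-anticipative}.

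For (ii):
\begin{itemize}
\item If $(t, x(\cdot \wedge t)) = (\tau, y(\cdot \wedge \tau))$, then $V^L = 0$ by \eqref{V^L_2}.
\item Otherwise $V_2 > 0$ by Proposition \ref{proposition_V_2}, (i), and \eqref{V^L_1} gives $V^L \geq V_2 > 0$.
\end{itemize}

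For (iii), I will use the bound $V^L \geq V_2$, which holds everywhere: trivially in the degenerate case, and by \eqref{V^L_1} otherwise. Under \eqref{V_L_lower_bound_condition_1}, Proposition \ref{proposition_V_2}, (ii), gives $V_2 \geq \|x(\cdot \wedge t) - y(\cdot \wedge \tau)\|_\infty^2$. Under \eqref{V_L_lower_bound_condition_2}, I apply the same proposition with the roles of $(t, x(\cdot))$ and $(\tau, y(\cdot))$ exchanged and then use the symmetry of $V_2$. The second inequality in \eqref{V_L_lower_bound} is immediate from $V_2 \geq V_1 \geq (2L^2 + 1)(t - \tau)^2 \geq (t - \tau)^2$.

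For (iv), I read the bounds off \eqref{V^L_1} using $0 \leq V_1 / V_2 \leq 1$:
\begin{equation*}
|P^L| \leq 2 |P_1| = 4 (2 L^2 + 1) |t - \tau|,
\qquad
\|Q^L\| \leq 2 \|Q_1\| = 8 \|x(t) - y(\tau)\|.
\end{equation*}
In the degenerate case both sides vanish.

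For (v), I write $\hat{v}^L = \hat{v}_3 + 2 \hat{v}_1$. Under exactly the stated hypothesis, Proposition \ref{proposition_V_3_differentiability} gives $ci$-differentiability of $\hat{v}_3$ at $(t, x(\cdot))$ with derivatives $(P_3, Q_3)$. By \eqref{V_1_derivatives}, $\hat{v}_1$ is $ci$-differentiable with derivatives $(P_1, Q_1)$. Since $ci$-differentiability is linear in the functional, $\hat{v}^L$ is $ci$-differentiable with derivatives $(P_3 + 2 P_1, Q_3 + 2 Q_1) = (P^L, Q^L)$.

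For (vi), I use the symmetry $\bar{v}^L(\tau, y(\cdot)) = V^L(\tau, y(\cdot), \bar{t}, \bar{x}(\cdot))$ from (i). This reduces the claim to (v) with $(\hat{\tau}, \hat{y}(\cdot)) := (\bar{t}, \bar{x}(\cdot))$, and the hypotheses translate verbatim. Part (v) then gives derivatives $(P^L, Q^L)(\tau, y(\cdot), \bar{t}, \bar{x}(\cdot))$, which equal $-(P^L, Q^L)(\bar{t}, \bar{x}(\cdot), \tau, y(\cdot))$ by the antisymmetry in \eqref{V_L_symmetry}.

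The main obstacle would be (v), but it has already been handled by Proposition \ref{proposition_V_3_differentiability}. What remains is bookkeeping, and the only point to check is that Proposition \ref{proposition_V_2}, (ii), applied with swapped arguments gives exactly the bound needed under \eqref{V_L_lower_bound_condition_2}.
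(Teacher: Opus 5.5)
Your proposal is correct and follows essentially the same route as the paper: each item is read off from the properties of $V_1$, $V_2$, $V_3$ (Propositions \ref{proposition_V_2}--\ref{proposition_V_3_differentiability}) together with \eqref{V^L_1} and \eqref{V^L_2}, and (vi) is reduced to (v) via the symmetry relations. The differences are cosmetic and confined to (iii). There you use $V^L \geq V_2 \geq V_1$ for the time bound instead of $V^L \geq 2V_1$. You also handle \eqref{V_L_lower_bound_condition_2} by swapping arguments in Proposition \ref{proposition_V_2}, (ii), rather than invoking the symmetry of $V^L$; both variants are equally valid.
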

    \begin{proof}
        (i)
            The functional $V^L$ is non-negative since $V_1$ and $V_3$ are non-negative.
            Continuity of the mappings $(V^L, P^L, Q^L)$ follows from continuity of $(V_1, P_1, Q_1)$ and $(V_3, P_3, Q_3)$ (see Proposition \ref{proposition_V_3_continuity}).
            Equalities \eqref{V_L_symmetry} follow from \eqref{V_1_symmetry} and \eqref{V_3_symmetry}, while equality \eqref{V_L_non-anticipative} follows from \eqref{V_1_non-anticipative} and \eqref{V_3_non-anticipative}.

        (ii)
            Let $V^L(t, x(\cdot), \tau, y(\cdot)) = 0$ and suppose that $(t, x(\cdot \wedge t)) \neq (\tau, y(\cdot \wedge \tau))$.
            Hence, due to \eqref{V^L_1}, we have $V_2(t, x(\cdot), \tau, y(\cdot)) = 0$.
            Consequently, by Proposition \ref{proposition_V_2}, (i), we obtain $(t, x(\cdot \wedge t)) = (\tau, y(\cdot \wedge \tau))$, a contradiction.
            On the other hand, if $(t, x(\cdot \wedge t)) = (\tau, y(\cdot \wedge \tau))$, then $V_1(t, x(\cdot), \tau, y(\cdot)) = 0$ and $V_3(t, x(\cdot), \tau, y(\cdot)) = 0$, yielding $V^L(t, x(\cdot), \tau, y(\cdot)) = 0$.

        (iii)
            Let condition \eqref{V_L_lower_bound_condition_1} be satisfied.
            If $(t, x(\cdot \wedge t)) = (\tau, y(\cdot \wedge \tau))$, inequalities \eqref{V_L_lower_bound} hold since $V^L$ is non-negative.
            Suppose that $(t, x(\cdot \wedge t)) \neq (\tau, y(\cdot \wedge \tau))$.
            Then, we derive (see \eqref{V^L_1})
            \begin{equation*}
                V^L(t, x(\cdot), \tau, y(\cdot))
                \geq V_2(t, x(\cdot), \tau, y(\cdot)),
            \end{equation*}
            which gives the first inequality in \eqref{V_L_lower_bound} by Proposition \ref{proposition_V_2}, (ii).
            Furthermore, we have (see \eqref{V_1} and \eqref{V^L})
            \begin{equation*}
                V^L(t, x(\cdot), \tau, y(\cdot))
                \geq 2 V_1(t, x(\cdot), \tau, y(\cdot))
                \geq 2 (2 L^2 + 1) (t - \tau)^2,
            \end{equation*}
            which implies that the second inequality in \eqref{V_L_lower_bound} is valid.
            It remains to note that the case of condition \eqref{V_L_lower_bound_condition_2} reduces to the case of condition \eqref{V_L_lower_bound_condition_1} thanks to \eqref{V_L_symmetry}.

        (iv)
            Inequalities \eqref{P_L_Q_L_upper_bound} follow from \eqref{V_2}, \eqref{V^L_1}, and \eqref{V^L_2}.

        (v)
            The statement follows from Proposition \ref{proposition_V_3_differentiability} and the fact that the functional $\hat{v}_1$ (see \eqref{hat_v_1}) is $ci$-differentiable at every point $(t, x(\cdot)) \in [0, T) \times C$ with the $ci$-de\-riv\-a\-tives given by \eqref{V_1_derivatives}.

        (vi)
            The statement follows from (v) if we take \eqref{V_L_symmetry} into account.

        The proof is complete.
    \end{proof}

    For convenience of further use in Section \ref{section_proofs}, we additionally formulate the following result, which is a direct corollary of Theorem \ref{theorem_V^L}.
    \begin{theorem} \label{theorem_V^L_2}
        Let $D \subset C$ be a non-empty set satisfying condition \eqref{D_property_L} and let mappings $(V^{L_D}, P^{L_D}, Q^{L_D})$ be defined by \eqref{V^L} with $L = L_D$.
        Then, the following statements hold.
        \begin{itemize}
            \item[\rm (i)]
            The functional $V^{L_D}$ is continuous, non-negative, and $V^{L_D}(t, x(\cdot), t, x(\cdot)) = 0$ for all $(t, x(\cdot)) \in [0, T] \times C$.

            \item[\rm (ii)]
            For every $(t, x(\cdot), \tau, y(\cdot)) \in [0, T] \times D \times [0, T] \times D$, the inequalities below are valid:
            \begin{equation*}
                \begin{aligned}
                    V^{L_D}(t, x(\cdot), \tau, y(\cdot))
                    \geq \|x(\cdot \wedge t) - y(\cdot \wedge \tau)\|_\infty^2,
                    \quad V^{L_D}(t, x(\cdot), \tau, y(\cdot))
                    \geq (t - \tau)^2.
                \end{aligned}
            \end{equation*}

            \item[\rm (iii)]
            For every $(\hat{t}, \hat{x}(\cdot), \hat{\tau}, \hat{y}(\cdot)) \in [0, T) \times D \times [0, T) \times D$, the functionals
            \begin{align*}
                \psi_1(t, x(\cdot))
                & \coloneq V^{L_D}(t, x(\cdot), \hat{\tau}, \hat{y}(\cdot))
                \quad \forall (t, x(\cdot)) \in [0, T] \times C, \\
                \psi_2(\tau, y(\cdot))
                & \coloneq V^{L_D}(\hat{t}, \hat{x}(\cdot), \tau, y(\cdot))
                \quad \forall (\tau, y(\cdot)) \in [0, T] \times C
            \end{align*}
            are $ci$-smooth,
            \begin{equation*}
                \partial_t \psi_1(\hat{t}, \hat{x}(\cdot))
                = - \partial_\tau \psi_2(\hat{\tau}, \hat{y}(\cdot)),
                \quad \nabla_{x(\cdot)} \psi_1(\hat{t}, \hat{x}(\cdot))
                = - \nabla_{y(\cdot)} \psi_2(\hat{\tau}, \hat{y}(\cdot)),
            \end{equation*}
            and
            \begin{equation*}
                \|\nabla_{x(\cdot)} \psi_1(\hat{t}, \hat{x}(\cdot))\|
                = \|\nabla_{y(\cdot)} \psi_2(\hat{\tau}, \hat{y}(\cdot))\|
                \leq 8 \|\hat{x}(\hat{t}) - \hat{y}(\hat{\tau})\|.
            \end{equation*}
        \end{itemize}
    \end{theorem}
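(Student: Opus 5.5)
The plan is to derive each item directly from Theorem \ref{theorem_V^L} with $L = L_D$. The work lies in checking that points of $D$ satisfy the Lipschitz hypotheses of parts (iii), (v) and (vi) of that theorem. The key observation is that, by \eqref{D_property_L}, every $x(\cdot) \in D$ satisfies $\|x(\xi_1) - x(\xi_2)\| \leq L_D |\xi_1 - \xi_2|$ for all $\xi_1, \xi_2 \in [0, T]$.

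For (i), continuity and non-negativity of $V^{L_D}$ are contained in Theorem \ref{theorem_V^L}, (i). The equality $V^{L_D}(t, x(\cdot), t, x(\cdot)) = 0$ follows from Theorem \ref{theorem_V^L}, (ii), since trivially $(t, x(\cdot \wedge t)) = (t, x(\cdot \wedge t))$.

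For (ii), take $(t, x(\cdot), \tau, y(\cdot))$ with $x(\cdot), y(\cdot) \in D$. If $\tau \geq t$, then $\|y(\tau) - y(\xi)\| \leq L_D(\tau - \xi) \leq L_D(\tau - t)$ for $\xi \in [t, \tau] \subset [0, T]$, which is condition \eqref{V_L_lower_bound_condition_1}. If $t \geq \tau$, the same argument applied to $x(\cdot)$ gives \eqref{V_L_lower_bound_condition_2}. In either case, \eqref{V_L_lower_bound} yields both inequalities.

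For (iii), fix $(\hat{t}, \hat{x}(\cdot), \hat{\tau}, \hat{y}(\cdot))$ with $\hat{x}(\cdot), \hat{y}(\cdot) \in D$ and $\hat{t}, \hat{\tau} < T$. To show $\psi_1$ is $ci$-smooth, I will show that it is $ci$-differentiable at \emph{every} $(t, x(\cdot)) \in [0, T) \times C$, not just at points of $D$. This is the step that needs care.

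Here I apply Theorem \ref{theorem_V^L}, (v) with $(\hat{\tau}, \hat{y}(\cdot))$. If $t \geq \hat{\tau}$, the hypothesis holds directly. If $t < \hat{\tau}$, the hypothesis $\|\hat{y}(\xi) - \hat{y}(t)\| \leq L_D(\xi - t)$ for $\xi \in [t, \hat{\tau}]$ holds because $\hat{y}(\cdot) \in D$ and $t \geq 0$. Hence $\psi_1$ is $ci$-differentiable everywhere on $[0, T) \times C$, with derivatives $P^{L_D}(\cdot, \cdot, \hat{\tau}, \hat{y}(\cdot))$ and $Q^{L_D}(\cdot, \cdot, \hat{\tau}, \hat{y}(\cdot))$. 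These are continuous by Theorem \ref{theorem_V^L}, (i), and $\psi_1$ itself is continuous, so $\psi_1$ is $ci$-smooth. The same argument using Theorem \ref{theorem_V^L}, (vi) and $\hat{x}(\cdot) \in D$ shows that $\psi_2$ is $ci$-smooth with derivatives $-P^{L_D}(\hat{t}, \hat{x}(\cdot), \cdot, \cdot)$ and $-Q^{L_D}(\hat{t}, \hat{x}(\cdot), \cdot, \cdot)$.

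Evaluating at $(\hat{t}, \hat{x}(\cdot))$ and $(\hat{\tau}, \hat{y}(\cdot))$ respectively gives $\partial_t \psi_1(\hat{t}, \hat{x}(\cdot)) = P^{L_D}(\hat{t}, \hat{x}(\cdot), \hat{\tau}, \hat{y}(\cdot)) = -\partial_\tau \psi_2(\hat{\tau}, \hat{y}(\cdot))$. The analogous identity holds for the gradients. Finally, the norm bound follows from \eqref{P_L_Q_L_upper_bound}.

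The only real obstacle is recognizing that the Lipschitz hypothesis in Theorem \ref{theorem_V^L}, (v) concerns the fixed argument $\hat{y}(\cdot)$ only. Since $\hat{y}(\cdot) \in D$, this hypothesis holds uniformly in the variable point, which gives differentiability on all of $[0, T) \times C$ as $ci$-smoothness requires.
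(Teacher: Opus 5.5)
Your proposal is correct and follows the paper's approach: the paper presents this theorem as a direct corollary of Theorem~\ref{theorem_V^L} with $L = L_D$, without writing out a proof. You supply the check the paper leaves implicit. Since the fixed function $\hat{y}(\cdot)$ (resp.\ $\hat{x}(\cdot)$) lies in $D$, the Lipschitz hypothesis of part (v) (resp.\ (vi)) holds at every point of $[0, T) \times C$, which is what $ci$-smoothness requires, and membership of both functions in $D$ gives the hypotheses of part (iii).
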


    Note that, in the proofs presented in Section \ref{section_proofs}, given a non-empty set $D \subset C$ satisfying condition \eqref{D_property_L}, we will not use the specific definition of the functional $V^{L_D}$;
    only its properties (i)--(iii) listed in Theorem \ref{theorem_V^L_2} will be employed.

\section{Proofs}
\label{section_proofs}

    In this section, we present the proofs of Theorems \ref{theorem_local_comparison_1} and \ref{theorem_local_comparison_2}.

    \begin{proof}[Proof of Theorem \ref{theorem_local_comparison_1}]
        For convenience, we split the proof into five steps.

        \smallskip

        \textit{Step 1.}
        Arguing by contradiction, suppose that
        \begin{equation} \label{proof_main_b}
            b
            \coloneq \max \bigl\{ \varphi_1(t, x(\cdot)) - \varphi_2(t, x(\cdot))
            \colon (t, x(\cdot)) \in [0, T] \times D \bigr\}
            > 0.
        \end{equation}
        The maximum is achieved by continuity of $\varphi_1$ and $\varphi_2$ and by compactness of $D$.
        Put
        \begin{equation} \label{proof_main_alpha}
            \alpha
            \coloneq \frac{b}{4 T}
            > 0.
        \end{equation}

        For every $\varepsilon > 0$ and every $\delta > 0$, consider a functional $\Phi_\varepsilon^\delta \colon [0, T] \times D \times [0, T] \times D \to \mathbb{R}$ defined for all $(t, x(\cdot), \tau, y(\cdot)) \in [0, T] \times D \times [0, T] \times D$ by
        \begin{align*}
            \Phi_\varepsilon^\delta(t, x(\cdot), \tau, y(\cdot))
            & \coloneq \varphi_1(t, x(\cdot)) - \varphi_2(\tau, y(\cdot)) \\
            & \quad - \alpha (2 T - t - \tau)
            - \frac{(t - \tau)^2}{\delta}
            - \frac{V^{L_D}(t, x(\cdot), \tau, y(\cdot))}{\varepsilon}.
        \end{align*}
        Here, the functional $V^{L_D}$ is defined according to \eqref{V^L} with $L = L_D$, where $L_D$ is the number from \eqref{D_property_L}.
        Recalling that $V^{L_D}$ is continuous by Theorem \ref{theorem_V^L_2}, (i), we conclude that there exists a point $(t_\varepsilon^\delta, x_\varepsilon^\delta(\cdot), \tau_\varepsilon^\delta, y_\varepsilon^\delta(\cdot)) \in [0, T] \times D \times [0, T] \times D$
        such that
        \begin{equation*}
            \Phi_\varepsilon^\delta(t_\varepsilon^\delta, x_\varepsilon^\delta(\cdot),
            \tau_\varepsilon^\delta, y_\varepsilon^\delta(\cdot))
            = \max \bigl\{ \Phi_\varepsilon^\delta(t, x(\cdot), \tau, y(\cdot))
            \colon (t, x(\cdot)), (\tau, y(\cdot))
            \in [0, T] \times D \bigr\}.
        \end{equation*}

        \smallskip

        \textit{Step 2.}
        Let $\varepsilon > 0$, $\delta > 0$.
        For every point $(t, x(\cdot)) \in [0, T] \times D$, taking into account that $V^{L_D}(t, x(\cdot), t, x(\cdot)) = 0$ by Theorem \ref{theorem_V^L_2}, (i), we have
        \begin{equation*}
            \varphi_1(t, x(\cdot))
            - \varphi_2(t, x(\cdot))
            - 2 \alpha (T - t)
            = \Phi_\varepsilon^\delta(t, x(\cdot), t, x(\cdot))
            \leq \Phi_\varepsilon^\delta(t_\varepsilon^\delta, x_\varepsilon^\delta(\cdot),
            \tau_\varepsilon^\delta, y_\varepsilon^\delta(\cdot)).
        \end{equation*}
        Therefore, due to \eqref{proof_main_b} and \eqref{proof_main_alpha}, we obtain
        \begin{equation} \label{proof_basic}
            \frac{b}{2}
            \leq \Phi_\varepsilon^\delta(t_\varepsilon^\delta, x_\varepsilon^\delta(\cdot),
            \tau_\varepsilon^\delta, y_\varepsilon^\delta(\cdot)).
        \end{equation}

        From \eqref{proof_basic}, we derive
        \begin{equation*}
            \frac{(t_\varepsilon^\delta - \tau_\varepsilon^\delta)^2}{\delta}
            + \frac{V^{L_D}(t_\varepsilon^\delta, x_\varepsilon^\delta(\cdot),
            \tau_\varepsilon^\delta, y_\varepsilon^\delta(\cdot))}{\varepsilon}
            \leq \varphi_1(t_\varepsilon^\delta, x_\varepsilon^\delta(\cdot))
            - \varphi_2(\tau_\varepsilon^\delta, y_\varepsilon^\delta(\cdot)).
        \end{equation*}
        Hence, and since $ V^{L_D}(t_\varepsilon^\delta, x_\varepsilon^\delta(\cdot), \tau_\varepsilon^\delta, y_\varepsilon^\delta(\cdot)) \geq 0$ by Theorem \ref{theorem_V^L_2}, (i), the inequalities
        \begin{equation} \label{proof_basic_esimates_varepsilon_delta}
            (t_\varepsilon^\delta - \tau_\varepsilon^\delta)^2
            \leq c \delta,
            \quad V^{L_D}(t_\varepsilon^\delta, x_\varepsilon^\delta(\cdot),
            \tau_\varepsilon^\delta, y_\varepsilon^\delta(\cdot))
            \leq c \varepsilon
        \end{equation}
        are valid with (see \eqref{proof_main_b})
        \begin{equation*}
            c
            \coloneq \max \bigl\{ \varphi_1(t, x(\cdot)) - \varphi_2(\tau, y(\cdot))
            \colon (t, x(\cdot)), (\tau, y(\cdot)) \in [0, T] \times D \bigr\}
            > 0.
        \end{equation*}
        According to Theorem \ref{theorem_V^L_2}, (ii), the second inequality in \eqref{proof_basic_esimates_varepsilon_delta} implies that
        \begin{equation} \label{proof_main_norms_varepsilon}
             \|x_\varepsilon^\delta(\cdot \wedge t_\varepsilon^\delta)
             - y_\varepsilon^\delta(\cdot \wedge \tau_\varepsilon^\delta)\|_\infty^2
             \leq c \varepsilon,
             \quad (t_\varepsilon^\delta - \tau_\varepsilon^\delta)^2
             \leq c \varepsilon.
        \end{equation}

        Further, noting that $V^{L_D}(t_\varepsilon^\delta, x_\varepsilon^\delta(\cdot), t_\varepsilon^\delta, x_\varepsilon^\delta(\cdot)) = 0$ by Theorem \ref{theorem_V^L_2}, (i), we get
        \begin{align*}
            & \varphi_1(t_\varepsilon^\delta, x_\varepsilon^\delta(\cdot))
            - \varphi_2(t_\varepsilon^\delta, x_\varepsilon^\delta(\cdot))
            - 2 \alpha (T - t_\varepsilon^\delta) \\
            & \quad = \Phi_\varepsilon^\delta(t_\varepsilon^\delta, x_\varepsilon^\delta(\cdot),
            t_\varepsilon^\delta, x_\varepsilon^\delta(\cdot)) \\
            & \quad \leq \Phi_\varepsilon^\delta(t_\varepsilon^\delta, x_\varepsilon^\delta(\cdot),
            \tau_\varepsilon^\delta, y_\varepsilon^\delta(\cdot)) \\
            & \quad \leq \varphi_1(t_\varepsilon^\delta, x_\varepsilon^\delta(\cdot))
            - \varphi_2(\tau_\varepsilon^\delta, y_\varepsilon^\delta(\cdot))
            - \alpha (2 T - t_\varepsilon^\delta - \tau_\varepsilon^\delta)
            - \frac{V^{L_D}(t_\varepsilon^\delta, x_\varepsilon^\delta(\cdot),
            \tau_\varepsilon^\delta, y_\varepsilon^\delta(\cdot))}{\varepsilon},
        \end{align*}
        which gives
        \begin{equation} \label{proof_main_V^L_varepsilon}
            \frac{V^{L_D}(t_\varepsilon^\delta, x_\varepsilon^\delta(\cdot),
            \tau_\varepsilon^\delta, y_\varepsilon^\delta(\cdot))}{\varepsilon}
            \leq \varphi_2(t_\varepsilon^\delta, x_\varepsilon^\delta(\cdot))
            - \varphi_2(\tau_\varepsilon^\delta, y_\varepsilon^\delta(\cdot))
            + \alpha (\tau_\varepsilon^\delta - t_\varepsilon^\delta).
        \end{equation}

        For $i \in \{1, 2\}$, denote by $\omega_{\varphi_i, D} \colon [0, + \infty) \to [0, + \infty)$ the modulus of continuity of the functional $\varphi_i$ defined by
        \begin{align*}
            \omega_{\varphi_i, D}(\theta)
            & \coloneq \max \bigl\{ |\varphi_i(t, x(\cdot)) - \varphi_i(\tau, y(\cdot))|
            \colon \\
            & \qquad (t, x(\cdot)), (\tau, y(\cdot)) \in [0, T] \times D,
            \ |t - \tau| + \|x(\cdot) - y(\cdot)\|_\infty \leq \theta \bigr\}
            \quad \forall \theta \geq 0.
        \end{align*}
        Using \eqref{D_property_1} and \eqref{proof_main_norms_varepsilon} and recalling that the functionals $\varphi_1$ and $\varphi_2$ are non-anticipative, we obtain
        \begin{equation} \label{proof_main_varphi_1_difference}
            \begin{aligned}
                | \varphi_1(t_\varepsilon^\delta, x_\varepsilon^\delta(\cdot))
                - \varphi_1(\tau_\varepsilon^\delta, y_\varepsilon^\delta(\cdot)) |
                & = | \varphi_1(t_\varepsilon^\delta, x_\varepsilon^\delta(\cdot \wedge t_\varepsilon^\delta))
                - \varphi_1(\tau_\varepsilon^\delta, y_\varepsilon^\delta(\cdot \wedge \tau_\varepsilon^\delta)) | \\
                & \leq \omega_{\varphi_1, D}(|t_\varepsilon^\delta - \tau_\varepsilon^\delta|
                + \|x_\varepsilon^\delta(\cdot \wedge t_\varepsilon^\delta)
                - y_\varepsilon^\delta(\cdot \wedge \tau_\varepsilon^\delta)\|_\infty) \\
                & \leq \omega_{\varphi_1, D}(2 \sqrt{c} \sqrt{\varepsilon})
            \end{aligned}
        \end{equation}
        and, similarly,
        \begin{equation} \label{proof_main_varphi_2_difference}
            | \varphi_2(t_\varepsilon^\delta, x_\varepsilon^\delta(\cdot))
            - \varphi_2(\tau_\varepsilon^\delta, y_\varepsilon^\delta(\cdot))|
            \leq \omega_{\varphi_2, D}(2 \sqrt{c} \sqrt{\varepsilon}).
        \end{equation}
        Hence, by \eqref{proof_main_norms_varepsilon}, \eqref{proof_main_V^L_varepsilon}, \eqref{proof_main_varphi_2_difference}, and Theorem \ref{theorem_V^L_2}, (ii), we have
        \begin{equation} \label{proof_main_norms_varepsilon_improved}
            \frac{\|x_\varepsilon^\delta(\cdot \wedge t_\varepsilon^\delta)
            - y_\varepsilon^\delta(\cdot \wedge \tau_\varepsilon^\delta)\|_\infty^2}{\varepsilon}
            \leq \omega_{\varphi_2, D}(2 \sqrt{c} \sqrt{\varepsilon}) + \alpha \sqrt{c} \sqrt{\varepsilon}.
        \end{equation}

        \smallskip

        \textit{Step 3.}
        Let $\varepsilon > 0$ and $\delta > 0$.
        Since $V^{L_D}(t_\varepsilon^\delta, x_\varepsilon^\delta(\cdot), \tau_\varepsilon^\delta, y_\varepsilon^\delta(\cdot)) \geq 0$ by Theorem \ref{theorem_V^L_2}, (i), it follows from \eqref{proof_basic} that
        \begin{equation*}
            \frac{b}{2}
            \leq \varphi_1(t_\varepsilon^\delta, x_\varepsilon^\delta(\cdot))
            - \varphi_2(\tau_\varepsilon^\delta, y_\varepsilon^\delta(\cdot)).
        \end{equation*}
        Therefore, in view of the boundary condition \eqref{varphi_1_varphi_2_boundary} and due to \eqref{proof_main_varphi_1_difference} and \eqref{proof_main_varphi_2_difference},
        \begin{align*}
            \frac{b}{2}
            & \leq \varphi_1(t_\varepsilon^\delta, x_\varepsilon^\delta(\cdot))
            - \varphi_1(T, x_\varepsilon^\delta(\cdot))
            + \varphi_2(T, x_\varepsilon^\delta(\cdot))
            - \varphi_2(t_\varepsilon^\delta, x_\varepsilon^\delta(\cdot)) \\
            & \quad + \varphi_2(t_\varepsilon^\delta, x_\varepsilon^\delta(\cdot))
            - \varphi_2(\tau_\varepsilon^\delta, y_\varepsilon^\delta(\cdot)) \\
            & \leq \omega_{\varphi_1, D}(T - t_\varepsilon^\delta)
            + \omega_{\varphi_2, D}(T - t_\varepsilon^\delta)
            + \omega_{\varphi_2, D}(2 \sqrt{c} \sqrt{\varepsilon})
        \end{align*}
        and, similarly,
        \begin{equation*}
            \frac{b}{2}
            \leq \omega_{\varphi_1, D}(T - \tau_\varepsilon^\delta)
            + \omega_{\varphi_2, D}(T - \tau_\varepsilon^\delta)
            + \omega_{\varphi_1, D}(2 \sqrt{c} \sqrt{\varepsilon}).
        \end{equation*}
        Thus, choosing $\varepsilon_\ast > 0$ from the condition
        \begin{equation*}
            \omega_{\varphi_1, D}(2 \sqrt{c} \sqrt{\varepsilon_\ast})
            \vee \omega_{\varphi_2, D}(2 \sqrt{c} \sqrt{\varepsilon_\ast})
            \leq \frac{b}{4},
        \end{equation*}
        we conclude that $t_\varepsilon^\delta < T$ and $\tau_\varepsilon^\delta < T$ for all $\varepsilon \in (0, \varepsilon_\ast]$ and $\delta > 0$.

        \smallskip

        \textit{Step 4.}
        Let $\varepsilon \in (0, \varepsilon_\ast]$ and $\delta > 0$.
        We consider test functionals $\psi_1 \colon [0, T] \times C \to \mathbb{R}$ and $\psi_2 \colon [0, T] \times C \to \mathbb{R}$ defined for all $(t, x(\cdot))$, $(\tau, y(\cdot)) \in [0, T] \times C$ by
        \begin{align*}
            \psi_1(t, x(\cdot))
            & \coloneq \varphi_2(\tau_\varepsilon^\delta, y_\varepsilon^\delta(\cdot))
            + \alpha (2 T - t - \tau_\varepsilon^\delta)
            + \frac{(t - \tau_\varepsilon^\delta)^2}{\delta}
            + \frac{V^{L_D}(t, x(\cdot), \tau_\varepsilon^\delta, y_\varepsilon^\delta(\cdot))}{\varepsilon}, \\
            \psi_2(\tau, y(\cdot))
            & \coloneq \varphi_1(t_\varepsilon^\delta, x_\varepsilon^\delta(\cdot))
            - \alpha (2 T - t_\varepsilon^\delta - \tau)
            - \frac{(t_\varepsilon^\delta - \tau)^2}{\delta}
            - \frac{V^{L_D}(t_\varepsilon^\delta, x_\varepsilon^\delta(\cdot), \tau, y(\cdot))}{\varepsilon}.
        \end{align*}

        In accordance with Theorem \ref{theorem_V^L_2}, (iii), the functionals $\psi_1$ and $\psi_2$ are $ci$-smooth,
        \begin{equation} \label{proof_derivatives_of_test_functionals}
            \partial_t \psi_1(t_\varepsilon^\delta, x_\varepsilon^\delta(\cdot))
            - \partial_\tau \psi_2(\tau_\varepsilon^\delta, y_\varepsilon^\delta(\cdot))
            = - 2 \alpha,
            \quad \nabla_{x(\cdot)} \psi_1(t_\varepsilon^\delta, x_\varepsilon^\delta(\cdot))
            = \nabla_{y(\cdot)} \psi_2(\tau_\varepsilon^\delta, y_\varepsilon^\delta(\cdot)),
        \end{equation}
        and
        \begin{equation} \label{proof_gradient_bound}
            \|\nabla_{x(\cdot)} \psi_1(t_\varepsilon^\delta, x_\varepsilon^\delta(\cdot))\|
            \leq 8 \frac{\|x_\varepsilon^\delta(t_\varepsilon^\delta)
            - y_\varepsilon^\delta(\tau_\varepsilon^\delta)\|}{\varepsilon}.
        \end{equation}

        Note that, for every $(t, x(\cdot)) \in [0, T] \times D$,
        \begin{equation} \label{varphi_1_max}
            \begin{aligned}
                \varphi_1(t_\varepsilon^\delta, x_\varepsilon^\delta(\cdot))
                - \psi_1(t_\varepsilon^\delta, x_\varepsilon^\delta(\cdot))
                & = \Phi_\varepsilon^\delta(t_\varepsilon^\delta, x_\varepsilon^\delta(\cdot),
                \tau_\varepsilon^\delta, y_\varepsilon^\delta(\cdot)) \\
                & \geq \Phi_\varepsilon^\delta(t, x(\cdot),
                \tau_\varepsilon^\delta, y_\varepsilon^\delta(\cdot)) \\
                & = \varphi_1(t, x(\cdot))
                - \psi_1(t, x(\cdot)),
            \end{aligned}
        \end{equation}
        and, similarly, for every $(\tau, y(\cdot)) \in [0, T] \times D$,
        \begin{equation} \label{varphi_2_min}
            \begin{aligned}
                \varphi_2(\tau_\varepsilon^\delta, y_\varepsilon^\delta(\cdot))
                - \psi_2(\tau_\varepsilon^\delta, y_\varepsilon^\delta(\cdot))
                & = - \Phi_\varepsilon^\delta(t_\varepsilon^\delta, x_\varepsilon^\delta(\cdot),
                \tau_\varepsilon^\delta, y_\varepsilon^\delta(\cdot)) \\
                & \leq - \Phi_\varepsilon^\delta(t_\varepsilon^\delta, x_\varepsilon^\delta(\cdot),
                \tau, y(\cdot)) \\
                & = \varphi_2(\tau, y(\cdot))
                - \psi_2(\tau, y(\cdot)).
            \end{aligned}
        \end{equation}

        Then, recalling that $\varphi_1$ is a viscosity $D$-subsolution and $\varphi_2$ is a viscosity $D$-supersolution of \eqref{HJ}, we derive
        \begin{align*}
            \partial_t \psi_1(t_\varepsilon^\delta, x_\varepsilon^\delta(\cdot))
            + H \bigl( t_\varepsilon^\delta, x_\varepsilon^\delta(\cdot),
            \nabla_{x(\cdot)} \psi_1(t_\varepsilon^\delta, x_\varepsilon^\delta(\cdot)) \bigr)
            & \geq 0, \\
            \partial_\tau \psi_2(\tau_\varepsilon^\delta, y_\varepsilon^\delta(\cdot))
            + H \bigl( \tau_\varepsilon^\delta, y_\varepsilon^\delta(\cdot),
            \nabla_{y(\cdot)} \psi_2(\tau_\varepsilon^\delta, y_\varepsilon^\delta(\cdot)) \bigr)
            & \leq 0.
        \end{align*}
        As a result, and due to \eqref{proof_derivatives_of_test_functionals}, we have
        \begin{equation} \label{contradiction_basic}
            H \bigl( t_\varepsilon^\delta, x_\varepsilon^\delta(\cdot),
            \nabla_{x(\cdot)} \psi_1(t_\varepsilon^\delta, x_\varepsilon^\delta(\cdot)) \bigr)
            - H\bigl( \tau_\varepsilon^\delta, y_\varepsilon^\delta(\cdot), \nabla_{x(\cdot)} \psi_1(t_\varepsilon^\delta, x_\varepsilon^\delta(\cdot)) \bigr)
            \geq 2 \alpha.
        \end{equation}

        \smallskip

        \textit{Step 5.}
        Take the number $L_{H, D}$ from assumption $(A.2)$ and fix $\varepsilon \in (0, \varepsilon_\ast]$ such that
        \begin{equation*}
            L_{H, D} \bigl( 8 \omega_{\varphi_2, D}(2 \sqrt{c} \sqrt{\varepsilon})
            + (8 \alpha + 1) \sqrt{c} \sqrt{\varepsilon} \bigr)
            \leq \frac{\alpha}{2}.
        \end{equation*}
        Using assumption $(A.1)$, choose $\delta > 0$ such that
        \begin{equation} \label{H_difference}
            |H(t, x(\cdot), s) - H(\tau, x(\cdot), s)|
            \leq \frac{\alpha}{2}
        \end{equation}
        for all $t$, $\tau \in [0, T]$ with $|t - \tau| \leq \sqrt{c} \sqrt{\delta}$ and all $x(\cdot) \in D$, $s \in B(8 \sqrt{c} / \sqrt{\varepsilon})$.

        Let us prove that
        \begin{equation} \label{contradiction_1}
            H \bigl( t_\varepsilon^\delta, x_\varepsilon^\delta(\cdot),
            \nabla_{x(\cdot)} \psi_1(t_\varepsilon^\delta, x_\varepsilon^\delta(\cdot)) \bigr)
            - H\bigl( \tau_\varepsilon^\delta, y_\varepsilon^\delta(\cdot), \nabla_{x(\cdot)} \psi_1(t_\varepsilon^\delta, x_\varepsilon^\delta(\cdot)) \bigr)
            \leq \alpha.
        \end{equation}
        Suppose that $t_\varepsilon^\delta \geq \tau_\varepsilon^\delta$.
        Then, due to \eqref{proof_main_norms_varepsilon}, \eqref{proof_main_norms_varepsilon_improved}, and \eqref{proof_gradient_bound}, we obtain
        \begin{equation} \label{contradiction_1_proof_1}
            \begin{aligned}
                & H \bigl( \tau_\varepsilon^\delta, x_\varepsilon^\delta(\cdot),
                \nabla_{x(\cdot)} \psi_1(t_\varepsilon^\delta, x_\varepsilon^\delta(\cdot)) \bigr)
                - H\bigl( \tau_\varepsilon^\delta, y_\varepsilon^\delta(\cdot), \nabla_{x(\cdot)} \psi_1(t_\varepsilon^\delta, x_\varepsilon^\delta(\cdot)) \bigr) \\
                & \quad \leq L_{H, D} \bigl( 1 + \|\nabla_{x(\cdot)} \psi_1(t_\varepsilon^\delta, x_\varepsilon^\delta(\cdot))\| \bigr)
                \|x_\varepsilon^\delta(\cdot \wedge \tau_\varepsilon^\delta)
                - y_\varepsilon^\delta(\cdot \wedge \tau_\varepsilon^\delta)\|_\infty \\
                & \quad \leq L_{H, D} \biggl( 1 + 8 \frac{\|x_\varepsilon^\delta(t_\varepsilon^\delta)
                - y_\varepsilon^\delta(\tau_\varepsilon^\delta)\|}{\varepsilon} \biggr)
                \|x_\varepsilon^\delta(\cdot \wedge t_\varepsilon^\delta)
                - y_\varepsilon^\delta(\cdot \wedge \tau_\varepsilon^\delta)\|_\infty \\
                & \quad \leq L_{H, D} \biggl( \|x_\varepsilon^\delta(\cdot \wedge t_\varepsilon^\delta)
                - y_\varepsilon^\delta(\cdot \wedge \tau_\varepsilon^\delta)\|_\infty
                + 8 \frac{\|x_\varepsilon^\delta(\cdot \wedge t_\varepsilon^\delta)
                - y_\varepsilon^\delta(\cdot \wedge \tau_\varepsilon^\delta)\|_\infty^2}{\varepsilon} \biggr) \\
                & \quad \leq L_{H, D} \bigl( 8 \omega_{\varphi_2, D}(2 \sqrt{c} \sqrt{\varepsilon})
                + (8 \alpha + 1) \sqrt{c} \sqrt{\varepsilon} \bigr) \\
                & \quad \leq \frac{\alpha}{2}.
            \end{aligned}
        \end{equation}
        Moreover, thanks to the inclusion $\nabla_{x(\cdot)} \psi_1(t_\varepsilon^\delta, x_\varepsilon^\delta(\cdot)) \in B(8 \sqrt{c} / \sqrt{\varepsilon})$ (see the first inequality in \eqref{proof_main_norms_varepsilon} and \eqref{proof_gradient_bound}) and the first inequality in \eqref{proof_basic_esimates_varepsilon_delta}, we get
        \begin{equation} \label{contradiction_1_proof_2}
            H \bigl( t_\varepsilon^\delta, x_\varepsilon^\delta(\cdot),
            \nabla_{x(\cdot)} \psi_1(t_\varepsilon^\delta, x_\varepsilon^\delta(\cdot)) \bigr)
            - H \bigl( \tau_\varepsilon^\delta, x_\varepsilon^\delta(\cdot),
            \nabla_{x(\cdot)} \psi_1(t_\varepsilon^\delta, x_\varepsilon^\delta(\cdot)) \bigr)
            \leq \frac{\alpha}{2}.
        \end{equation}
        Inequality \eqref{contradiction_1} follows from \eqref{contradiction_1_proof_1} and \eqref{contradiction_1_proof_2}.
        The case where $t_\varepsilon^\delta < \tau_\varepsilon^\delta$ can be handled similarly.

        According to \eqref{contradiction_basic} and \eqref{contradiction_1}, we have $\alpha \leq 0$, which contradicts \eqref{proof_main_alpha}.
        The proof of Theorem \ref{theorem_local_comparison_1} is complete.
    \end{proof}

    \begin{lemma} \label{lemma}
        Let $D \subset C$ be a non-empty set satisfying \eqref{D_property_v}, let $\varphi \colon [0, T] \times D \to \mathbb{R}$ be a functional such that $L_{\varphi, D} < + \infty$, and let $\psi \colon [0, T] \times C \to \mathbb{R}$ be a functional $ci$-dif\-fer\-en\-ti\-a\-ble at a point $(t, x(\cdot)) \in [0, T) \times D$.
        Then, the following statements hold.
        \begin{itemize}
            \item[\rm (i)]
                If condition \eqref{subsolution_condition} is fulfilled, the inequality below is valid:
                \begin{equation} \label{lemma_1}
                    \partial_t \psi(t, x(\cdot))
                    + \langle \nabla_{x(\cdot)} \psi(t, x(\cdot)), v \rangle
                    \geq - L_{\varphi, D}
                    \quad \forall v \in B(1).
                \end{equation}

            \item[\rm (ii)]
                If condition \eqref{supersolution_condition} is fulfilled, the inequality below is valid:
                \begin{equation*}
                    \partial_t \psi(t, x(\cdot))
                    + \langle \nabla_{x(\cdot)} \psi(t, x(\cdot)), v \rangle
                    \leq L_{\varphi, D}
                    \quad \forall v \in B(1).
                \end{equation*}
        \end{itemize}
    \end{lemma}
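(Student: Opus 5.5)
Since $\psi$ is $ci$-differentiable at $(t,x(\cdot))$, we test it along the straight-line extensions $z_{t,x(\cdot),v}(\cdot)$, which lie in $D$ by \eqref{D_property_v}, and compare with the Lipschitz-type bound defining $L_{\varphi,D}$ in \eqref{L_varphi_definition}. We prove (i) in detail; (ii) is identical with all inequalities reversed (equivalently, apply (i) to $-\varphi$ and $-\psi$, noting that $L_{-\varphi,D}=L_{\varphi,D}$ and that \eqref{supersolution_condition} for $(\varphi,\psi)$ is \eqref{subsolution_condition} for $(-\varphi,-\psi)$).

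Fix $v\in B(1)$ and write $z(\cdot)\coloneq z_{t,x(\cdot),v}(\cdot)$. By construction $z(\cdot)\in \Lip(t,x(\cdot))$, and $z(\cdot)\in D$. For every $\delta\in(0,T-t]$ the point $(t+\delta,z(\cdot))$ belongs to $[0,T]\times D$. The maximality condition \eqref{subsolution_condition} therefore gives
\begin{equation*}
    \psi(t+\delta,z(\cdot))-\psi(t,x(\cdot))\geq \varphi(t+\delta,z(\cdot))-\varphi(t,x(\cdot)).
\end{equation*}
By the definition \eqref{L_varphi_definition} of $L_{\varphi,D}$, the right-hand side is at least $-L_{\varphi,D}\,\delta$.

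We now use the $ci$-differentiability of $\psi$ at $(t,x(\cdot))$ along this $z(\cdot)$. Since $z(t+\delta)-x(t)=v\delta$, we have
\begin{equation*}
    \psi(t+\delta,z(\cdot))-\psi(t,x(\cdot))=\bigl(\partial_t\psi(t,x(\cdot))+\langle\nabla_{x(\cdot)}\psi(t,x(\cdot)),v\rangle\bigr)\delta+o(\delta)
\end{equation*}
as $\delta\to0^+$. Combining this with the previous inequality, dividing by $\delta>0$ and letting $\delta\to0^+$ yields \eqref{lemma_1}.

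No step is a real obstacle. The only points to check are that $T-t>0$, which holds because $t<T$, so that admissible $\delta$ exist, and that $z_{t,x(\cdot),v}(\cdot)$ indeed belongs to $\Lip(t,x(\cdot))$, so that the definition of $ci$-differentiability applies along it.
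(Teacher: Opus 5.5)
Your proposal is correct and follows essentially the same argument as the paper. You test along $z_{t,x(\cdot),v}(\cdot)\in D$, combine the maximality condition \eqref{subsolution_condition} with the bound $-L_{\varphi,D}\,\delta$ from \eqref{L_varphi_definition}, then divide by $\delta$ and pass to the limit using $ci$-differentiability and $z(t+\delta)-x(t)=v\delta$. Your reduction of (ii) to (i) via $(-\varphi,-\psi)$ is also valid and simply makes the paper's ``similarly'' explicit.
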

    \begin{proof}
        We proof (i) only, since the proof for (ii) is similar.

        Fix $v \in B(1)$, denote $z(\cdot) \coloneq z_{t, x(\cdot), v}(\cdot)$ (see \eqref{z_v}), and observe that $z(\cdot) \in D$ by \eqref{D_property_v}.
        Based on \eqref{subsolution_condition} and \eqref{L_varphi_definition}, we derive
        \begin{equation} \label{lemma_1_proof}
            \frac{\psi(t + \delta, z(\cdot))
            - \psi(t, x(\cdot))}{\delta}
            \geq \frac{\varphi(t + \delta, z(\cdot))
            - \varphi(t, x(\cdot))}{\delta}
            \geq - L_{\varphi, D}
            \quad \forall \delta \in (0, T - t].
        \end{equation}
        Since $\psi$ is $ci$-differentiable at $(t, x(\cdot))$ and $z(t + \delta) - x(t) = v \delta$ for all $\delta \in (0, T - t]$, we have
        \begin{equation*}
            \lim_{\delta \to 0^+} \frac{\psi(t + \delta, z(\cdot))
            - \psi(t, x(\cdot))}{\delta}
            = \partial_t \psi(t, x(\cdot))
            + \langle \nabla_{x(\cdot)} \psi(t, x(\cdot)), v \rangle.
        \end{equation*}
        Passing to the limit as $\delta \to 0^+$ in \eqref{lemma_1_proof}, we obtain \eqref{lemma_1}.
        The proof is complete.
    \end{proof}

    \begin{proof}[Proof of Theorem \ref{theorem_local_comparison_2}]
        Repeat Steps 1--4 of the proof of Theorem \ref{theorem_local_comparison_1}.

        Let $\varepsilon \in (0, \varepsilon_\ast]$ and $\delta > 0$.
        Thanks to the Lipschitz conditions \eqref{varphi_1_varphi_2_Lipschitz}, using \eqref{varphi_1_max} and \eqref{varphi_2_min} and applying Lemma \ref{lemma}, we obtain
        \begin{equation} \label{proof_2_conclusions}
            \begin{aligned}
                \partial_t \psi_1(t_\varepsilon^\delta, x_\varepsilon^\delta(\cdot))
                + \langle \nabla_{x(\cdot)} \psi_1(t_\varepsilon^\delta, x_\varepsilon^\delta(\cdot)),
                v \rangle
                & \geq - L_{\varphi_1, D}, \\
                \partial_\tau \psi_2(\tau_\varepsilon^\delta, y_\varepsilon^\delta(\cdot))
                + \langle \nabla_{y(\cdot)} \psi_2(\tau_\varepsilon^\delta, y_\varepsilon^\delta(\cdot)), v \rangle
                & \leq L_{\varphi_2, D}
            \end{aligned}
        \end{equation}
        for all $v \in B(1)$.
        Substituting $v = 0$ into the second inequality in \eqref{proof_2_conclusions} implies that $\partial_\tau \psi_2(\tau_\varepsilon^\delta, y_\varepsilon^\delta(\cdot)) \leq L_{\varphi_2, D}$.
        Hence, according to the first equality in \eqref{proof_derivatives_of_test_functionals} and since $\alpha > 0$, we have $\partial_t \psi_1(t_\varepsilon^\delta, x_\varepsilon^\delta(\cdot)) \leq L_{\varphi_2, D}$.
        Then, by the first inequality in \eqref{proof_2_conclusions},
        \begin{equation*}
            \langle \nabla_{x(\cdot)} \psi_1(t_\varepsilon^\delta, x_\varepsilon^\delta(\cdot)), v \rangle
            \geq - L_{\varphi_1, D} - L_{\varphi_2, D}
            \quad \forall v \in B(1),
        \end{equation*}
        which gives
        \begin{equation*}
            \|\nabla_{x(\cdot)} \psi_1(t_\varepsilon^\delta, x_\varepsilon^\delta(\cdot))\|
            \leq R,
            \quad R
            \coloneq L_{\varphi_1, D} + L_{\varphi_2, D} + 1.
        \end{equation*}

        Further reasoning is similar to Step 5 of the proof of Theorem \ref{theorem_local_comparison_1}.
        Take the number $L_{H, D, R}$ from assumption $(A.3)$ and fix $\varepsilon \in (0, \varepsilon_\ast]$ such that $L_{H, D, R} \sqrt{c} \sqrt{\varepsilon} \leq \alpha / 2$.
        Using assumption $(A.1)$, choose $\delta > 0$ such that inequality \eqref{H_difference} holds for all $t$, $\tau \in [0, T]$ with $|t - \tau| \leq \sqrt{c} \sqrt{\delta}$ and all $x(\cdot) \in D$, $s \in B(R)$.
        Suppose that $t_\varepsilon^\delta \geq \tau_\varepsilon^\delta$.
        Then, according to the first inequality in \eqref{proof_main_norms_varepsilon}, we have
        \begin{align*}
            & H \bigl( \tau_\varepsilon^\delta, x_\varepsilon^\delta(\cdot),
            \nabla_{x(\cdot)} \psi_1(t_\varepsilon^\delta, x_\varepsilon^\delta(\cdot)) \bigr)
            - H\bigl( \tau_\varepsilon^\delta, y_\varepsilon^\delta(\cdot), \nabla_{x(\cdot)} \psi_1(t_\varepsilon^\delta, x_\varepsilon^\delta(\cdot)) \bigr) \\
            & \quad \leq L_{H, D, R}
            \|x_\varepsilon^\delta(\cdot \wedge \tau_\varepsilon^\delta)
            - y_\varepsilon^\delta(\cdot \wedge \tau_\varepsilon^\delta)\|_\infty \\
            & \quad \leq L_{H, D, R} \sqrt{c} \sqrt{\varepsilon} \\
            & \quad \leq \frac{\alpha}{2}.
        \end{align*}
        Moreover, in view of the first inequality in \eqref{proof_basic_esimates_varepsilon_delta}, inequality \eqref{contradiction_1_proof_2} takes place.
        Thus, we conclude that inequality \eqref{contradiction_1} is valid (the case where $t_\varepsilon^\delta < \tau_\varepsilon^\delta$ can be handled similarly).

        From \eqref{contradiction_basic} and \eqref{contradiction_1}, it follows that $\alpha \leq 0$, which contradicts \eqref{proof_main_alpha}.
        The proof of Theorem \ref{theorem_local_comparison_2} is complete.
    \end{proof}

\end{document}